\documentclass[twoside]{amsart}

\usepackage{amssymb}
\usepackage{graphicx}
\usepackage{caption}
\usepackage{bm}
\usepackage{color}

\newcommand{\rmd}{\mathrm{d}}

\newcommand{\diag}{\mathrm{diag}}

\newcommand{\I}{\mathcal{I}}
\newcommand{\J}{\mathcal{J}}

\newcommand{\M}{\phantom{-}}

\newcommand{\mFr}{\mathfrak{m}}

\DeclareMathOperator{\modR}{mod}
\DeclareMathOperator{\Ord}{Ord}

\newcommand{\dd}{\textsf{d}}
\DeclareMathOperator{\SymOtimes}{\overset{\mathrm{sym}}{\otimes}}

\newcommand{\Jac}{\mathrm{Jac}}
\newcommand{\Mat}{\mathrm{Mat}}

\newcommand{\ImR}{\mathrm{Im}\,}

\DeclareMathOperator{\Complex}{\mathbb{C}}

\DeclareMathOperator{\Integer}{\mathbb{Z}}

\newtheorem{lemma}{Lemma}[section]
\newtheorem{theo}{Theorem}[section]

\newtheorem{cor}{Corollary}[section]

\theoremstyle{definition}
\newtheorem{Def}{Definition}
\newtheorem{rem}{Remark}[section]

\theoremstyle{plain}

\title[Igusa's map on vanishing hyperelliptic modular forms]
{Igusa's map on modular forms vanishing on the hyperelliptic locus }
\author{J Bernatska, Y Kopeliovich}
\address{}
\email{jbernatska@gmail.com}
\date{\today}

\allowdisplaybreaks[4]

\begin{document}
 
\maketitle 
\begin{abstract}
We extend Igusa's map $\rho$ to modular forms which vanish 
on the hyperelliptic locus of the Siegel upper half-plane. 
The lowest non-vanishing derivatives of such modular forms are computed
with the help of the general Thomae formula, they serve as vector-valued modular forms.
This approach is illustrated by examples in genera $3$ and $4$.
\end{abstract}

MSC2010:  11F46, 14J15, 14K25, 14H42

\section{Introduction}

The theory of Siegel modular forms 
 has many applications in various branches of mathematics. 
 It started from attempts to understand the moduli space of a Riemann surface via its period matrix and theta functions. 
One of the most outstanding problems is to produce algebraic equations that would characterize 
the Jacobian locus inside  the Siegel upper-half plane $\mathfrak{S}_g$ of degree $g$. 
This problem was solved in degrees 
$g=4$, $5$ and a significant progress was made for an arbitrary degree $g$, see \cite{FarGrMan}. 
Recently, applications of Siegel modular forms to superstring theory arose, see for example \cite{GrMa}.

Following Igusa's notation, we denote by $\mathfrak{A}(\Gamma_g)$ the graded ring of modular 
forms belonging to the modular 
 group $\Gamma_g = \mathrm{Sp}(g,\Integer)$ of degree (or genus\footnote{We prefer to call $g$ a genus
 in order to distinguish it from other degrees.}) $g$. As shown in \cite{Ig1964},
 $\mathfrak{A}(\Gamma_g)$ is the integral closure of the ring of theta constants
within its field of fractions. 
In \cite{Ig1967},  Igusa defined the homomorphism $\rho$ of $\mathfrak{A}(\Gamma_g)$
to a graded ring $\mathcal{S}(2,2g+2)$ of projective invariants of a binary form of degree $(2g+2)$. 
The homomorphism $\rho$, which we call the map $\rho$ below,
is based on the first Thomae formula, which gives expressions for non-singular theta constants
in terms of roots of  a binary form.  Evidently, the map $\rho$ is defined within the hyperelliptic locus  
of~$\mathfrak{S}_g$,  where the Thomae formulas work. We denote the ring of modular 
forms restricted to the hyperelliptic locus by $\mathfrak{H}(\Gamma_g)$.

The map $\rho$ enabled Igusa to determine the structure of the modular form ring 
in genus $2$, see \cite{Ig1962}. Further, it was shown in \cite{Ig1967}, that in genus $3$ 
an exact sequence exists
$$ 0 \to \chi_{18} \mathfrak{A}(\Gamma_3) \to \mathfrak{A}(\Gamma_3) \overset{\rho}{\to} \mathcal{S}(2,8),$$
where $\chi_{18}$ is the cusp form of weight $18$, defined as  the product of all even theta constants. 
In \cite{Tsu1986}, the behavior of modular forms on $\mathfrak{H}(\Gamma_3)$ 
was completely interpreted as that of the corresponding invariants.
 In the ring  $\mathfrak{H}(\Gamma_3)$ everything is computable rather than in $\mathfrak{A}(\Gamma_3)$. 
 So the ring  $\mathcal{S}(2,8)$  together with the map $\rho$ gives much information about modular forms. 

The map $\rho$ is central in \cite{Poor2008}, 
where the structure of the modular form ring in genus~$4$ is investigated.
Similar ideas are employed in \cite{Poor1996} to show that the generalized Schottky form in 
genus $g$ vanishes 
on the hyperelliptic locus of $\mathfrak{S}_g$. 
Furthermore, some results served as one of motivations to the present paper. 
Namely,  it is shown  in \cite[Cor.\,3.9]{Poor1996}, that a hyperelliptic cusp form  
of  weight less than $8+4/g$  must vanish on the hyperelliptic locus of $\mathfrak{S}_g$.
This implies that further investigation of low weight cusp forms 
requires taking their derivatives. 

The main goal of the present paper is to extend the map $\rho$ 
to modular forms which vanish, but not identically, on the hyperelliptic locus of the Siegel upper half-plane. 
As usual,  the map $\rho$ is considered over the ring $\mathfrak{H}(\Gamma_g)$ 
of theta constants on the hyperelliptic locus.
Until now,  $\rho$ was restricted to non-vanishing modular forms 
from $\mathfrak{H}(\Gamma_g)$, that is modular forms constructed from 
theta constants with non-singular even characteristics.
A recently discovered generalization of the Thomae formulas, see \cite{bGTF},  
allows to work with theta constants vanishing on the hyperelliptic locus.
Using these formulas we are able to compute explicitly the lowest non-vanishing theta
derivatives with singular characteristics. In such a way an extension of the map $\rho$ is accomplished. 
We illustrate  our approach by examples in genera $3$ and $4$.

As a part of our investigation we are led to the question of the vanishing property 
of various modular forms on the hyperelliptic locus. 
So, we need an effective way to construct vanishing modular forms.
We generate modular forms from monomials in theta constants composed
on the base of G\"{o}pel systems, --- that appears to be a fruitful idea. 
Such an approach led us to a beautiful result.
In genus $g=3$, we discovered a pre-image of a homogeneous polynomial $I_1$ 
in roots of a binary $8$-form, which we call
 a quasi-invariant $I_1$, since it is an invariant  of degree $1$ and weight $4$
under unimodular transformations, but not symmetric with respect to permutations of roots. 
Such a quasi-invariant did not appear in the literature.

The paper is organized as follows. In Preliminaries, we briefly describe 
the hyperelliptic locus of $\mathfrak{S}_g$, the connection between not normalized and normalized
periods, introduce interpretation of characteristics in terms of partitions, which is extremely helpful
for classifying characteristics with respect to multiplicities. 
We briefly recall the notions of
G\"{o}pel groups and G\"{o}pel systems on characteristics, and give the old and new Thomae formulas.
In Section $3$, we define objects which we call  vector-valued hyperelliptic modular forms. 
These objects behave as modular forms on the hyperelliptic locus.
Then we investigate the action of the modular group on theta derivatives
and vector-valued modular forms. 

In Sections $4$ and $5$ we present new results regarding 
vector-valued modular forms in genera $3$ and $4$, respectively. 
In more detail, in Section $4$, the derivative of $\chi_{18}$ is defined and shown to be 
a vector-valued modular  form 
of weight $20$. We introduce a bunch of new hyperelliptic modular forms of weight $4$,
vanishing to the first order on the hyperelliptic locus, whose derivatives are 
vector-valued modular forms of weight~$6$.
We also construct modular forms of weight $0$ vanishing on the hyperelliptic locus,
such that the sum of their derivatives produces the mentioned quasi-invariant $I_1$.
In Section $5$, it is shown that in genus $4$, hyperelliptic modular forms vanishing to the first order
exist in weight $4$, and vanishing to the second order in weight $8$.
Finally, a modular form $\chi_{68}$ of weight $68$ is defined similarly to $\chi_{18}$ in genus $3$.
It is shown that $\chi_{68}$ vanishes to the $10$-th order on the hyperelliptic locus of $\mathfrak{S}_4$, 
and its derivative is a vector-valued modular form of  weight~$88$, 
which produce coefficients of the corresponding  binary form of degree $10$.

\section*{Aknowledgement}
The authors are grateful to Cris Poor  for fruitful discussions which led to the present paper.

\section{Preliminaries}
\subsection{Siegel modular forms}
Let $\mathfrak{S}_g = \{\tau \in \Mat(g\times g) \mid \tau^t = \tau, \ImR \tau >0\}$ 
be the Siegel upper-half plane of degree $g$, and
$\Gamma_g = \mathrm{Sp}(g,\Integer)$ be the homogeneous modular group
of degree $g$ of level $1$.  $\Gamma_g$ acts on $\mathfrak{S}_g$ as
\begin{gather}\label{MGActTau}
\forall \gamma = \begin{pmatrix}
  a & b \\ c & d
 \end{pmatrix}  \in  \Gamma_g \qquad
 \gamma \langle \tau \rangle = (a \tau + b) (c \tau + d)^{-1}.
\end{gather}

\begin{Def}
An analytic function $f: \mathfrak{S}_g \mapsto \Complex$  which satisfies ($\kappa>0$)
\begin{gather}\label{MGActTau}
\forall \gamma \in \Gamma_g  \qquad  
f( \gamma \langle \tau \rangle ) = \big( \det(c \tau +d) \big)^\kappa f(\tau) 
\end{gather}
is called a Siegel modular form 
of degree $g$ and weight $\kappa$, or simply a modular form of weight $\kappa$.
\end{Def}
Modular forms of a fixed weight $\kappa$ form a complex vector space
$\mathfrak{A}_\kappa(\Gamma_g)$, and the graded ring 
$\mathfrak{A}(\Gamma_g) = \oplus_{k>0} \mathfrak{A}_\kappa(\Gamma_g)$
is called the ring of modular forms.

We do not restrict the weight $\kappa$ to integers as in \cite{Ig1964}, 
or even integers as in \cite{Tsu1986}. 
In fact, every theta constant is a modular form of weight $1/2$ belonging to $\Gamma_g(2)$, 
and we work with
half-integer weights. 

Also we extend this definition of modular forms to derivatives with respect to $\tau$,
see Section~\ref{s:ThetaTransf}.

Let $\mathfrak{W}_g  = \{\Omega \in \Mat(2g \times 2g) \mid 
\Omega J \Omega^t = 2 \imath \pi J\}$ be the group of
full period matrices with
the complex structure
\begin{gather*}
J = \left( \begin{array}{cc}  \M 0_g & 1_g \\ -1_g & 0_g
\end{array}  \right),
\end{gather*}
where $1_g$ is the identity matrix of size $g$, and $0_g$ is $g\times g$ zero matrix.
The relation $\Omega J \Omega^t = 2 \imath \pi J$ is known as the Legendre period relation.
Evidently, $\mathfrak{W}_g \sim  \mathrm{Sp}(g,\Complex)$.
Applying the Gauss decomposition to a period matrix $\Omega \in \mathfrak{W}_g$
we obtain 
\begin{gather}\label{OMatr}
\Omega = 
\begin{pmatrix} \eta' & \omega' \\ \eta & \omega \end{pmatrix}
= \begin{pmatrix} 1_g & \omega' \omega^{-1} \\ 0 & 1_g \end{pmatrix}
\begin{pmatrix} \eta' - \omega' \omega^{-1} \eta & 0 \\ 0 & \omega \end{pmatrix}
\begin{pmatrix} 1_g & 0 \\ \omega^{-1}  \eta & 1_g \end{pmatrix}.
\end{gather}
Here $\omega$ and $\omega'$ denote first kind integrals along canonical cycles
$\mathfrak{a}_j$ and $\mathfrak{b}_j$, $j=1$, \ldots, $g$, respectively, and 
$\eta$, $\eta'$ denote second kind integrals along the same  cycles.
The group of upper triangular block matrices represents the Jacobian locus
$\mathfrak{J}_g \subset \mathfrak{S}_g$,
and $\tau_{\J} = \omega' \omega^{-1}$ is a Jacobian point.
$\Gamma_g$ acts on $\mathfrak{W}_g$ as
\begin{gather*} 
\forall \gamma \in  \Gamma_g \qquad
 \gamma \langle \Omega \rangle =  
 \begin{pmatrix} a \eta' + b \eta & a \omega' + b \omega \\ 
c \eta' + d \eta & c \omega' + d \omega \end{pmatrix}.
\end{gather*}
The Gauss decomposition of $ \gamma \langle \Omega \rangle$ gives
the action on $\mathfrak{J}_g$:
\begin{subequations}\label{GammaTau}
\begin{align}
&\gamma \langle \omega \rangle = c \omega' + d \omega = (c \tau_\J + d) \omega, \\
&\gamma \langle \omega' \rangle =  a \omega' + b \omega =   (a \tau_\J + b) \omega, \\
&\gamma \langle \tau_\omega \rangle = (a \omega' + b \omega) (c \omega' + d \omega)^{-1}
= (a \tau_\J + b) (c \tau_\J + d)^{-1},
\end{align}
\end{subequations}
the latter coincide with \eqref{MGActTau}. 

Therefore,
We may consider $\mathfrak{J}_g$ as a factor-group
$\mathfrak{W}_g / \mathfrak{P}_g$, where
\begin{gather*}
\mathfrak{P}_g = \{
\begin{pmatrix} \eta' - \omega' \omega^{-1} \eta & 0 \\ \eta & \omega \end{pmatrix}
\mid \Omega J \Omega^t = 2 \imath \pi J \}.
\end{gather*}

\subsection{Hyperelliptic locus}\label{ss:HEllLoc}
A genus $g$ hyperelliptic curve $\mathcal{C}$ is defined by
\begin{gather}\label{Cg}
 0 = f(x,y) = -y^2 +  \alpha_0 \prod_{j=0}^{2g+1} (x-e_j).
\end{gather}
Branch points $\{(e_j,0)\}$ are supposed to be all distinct if a curve is non-degenerate,
that is its genus equals $g$. For brevity, we denote branch points by their first coordinate $e_j$.
Without loss of generality we assign $ \alpha_0 =1$, since rescaling the $y$-coordinate does not affect 
the period matrix $\tau$. One of branch points can be sent to infinity, say $(e_0,0)$.

Every curve \eqref{Cg} corresponds to a binary form of degree $2g+2$:
$$\alpha(x,z) = \alpha_0 \prod_{j=0}^{2g+1} (x-e_j z).$$
with roots  $e_j$, $j=0$, \ldots, $2g+1$.

Homology basis is defined after H.\,Baker \cite[p.\,303]{bak898}.
One can imagine a continuous path through all branch points, which starts at $e_0$ and ends at $e_{2g+1}$.
The branch points are numerated along the path. The point $e_0$ serves as the base-point. 

Cuts are made between points $e_{2k-1}$ and $e_{2k}$ with $k$ from $1$ to $g$. 
One more cut $[e_{2g+1}, e_0]$ goes through infinity.
Canonical homology cycles are defined as follows.
Each $\mathfrak{a}_k$-cycle encircles the cut $(e_{2k-1},e_{2k})$, $k=1$, \ldots $g$,
and each $\mathfrak{b}_k$-cycle emerges from the cut $(e_{2k-1},e_{2k})$ and enters the cut $(e_{2g+1},e_{0})$,
see fig.~\ref{cycles}. 
\begin{figure}[h]
\includegraphics[width=0.6\textwidth]{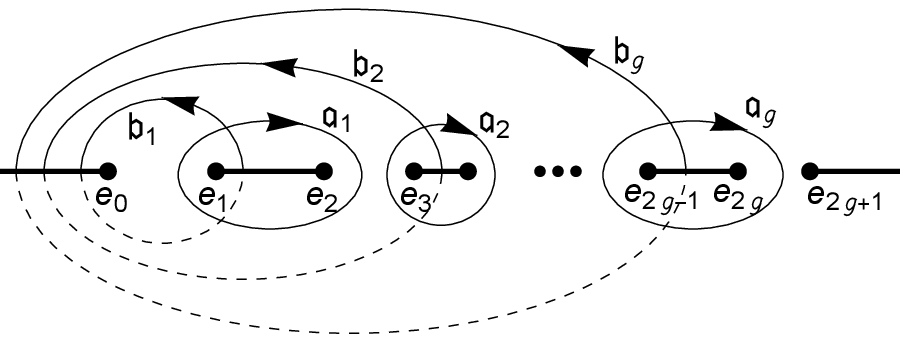}
\caption{} \label{cycles}
\end{figure}

The standard cohomology basis is employed, see for example \cite[p.\,306]{bak898}, 
and denoted  $\rmd u = (\rmd u_1$, $\rmd u_2$, $\dots$, $\rmd u_{g})$.
These differentials give rise to not normalized period matrices.

\subsection{Theta and sigma functions}\label{ss:ThetaChar}
Each curve of the family $\mathcal{C}$ has a Jacobian variety $\Jac(\mathcal{C})=\Complex^g/ \Lambda$,
which is a quotient space of $\Complex^g$ by the lattice $\Lambda$ 
of periods formed by columns of the matrix $(\omega'{}^t, \omega^t)$.
We denote not normalized coordinates on $\Jac(\mathcal{C})$ by $u=(u_1,$ $u_2$, \ldots, $u_{g})$. 
The  normalized periods are $(\tau, 1_g)$, where $\tau = \omega' \omega^{-1}$,
they define a normalized lattice and the corresponding Jacobian variety $\mathfrak{J}_g$. The 
normalized coordinates  are $v = u \omega^{-1}$, where
$v=(v_1$, $v_2$, \ldots, $v_g)$. 

The theta function is an entire function on $\Complex^g$ and normalized lattice:
\begin{gather}\label{ThetaDef}
 \theta(v;\tau) = \sum_{n\in \Integer^g} \exp \big(\imath \pi n^t \tau n + 2\imath \pi  v n\big).
\end{gather}

Abel's map $\mathcal{A}$ maps the curve to its Jacobian
\begin{gather}\label{AbelM}
 \Jac(\mathcal{C}) \ni \mathcal{A}(P) = \int_{e_0}^P \rmd v,\qquad P\in \mathcal{C}.
\end{gather}
Recall that we have chosen $e_0$ to be the base-point.
Abel's map of a positive divisor $\mathcal{D} = \sum_{i=1}^n P_i$ 
on $\mathcal{C}$ is defined by
\begin{gather}\label{AbelMD}
 \mathcal{A}(\mathcal{D}) = \sum_{i=1}^n 
 \int_{e_0}^{P_i} \rmd v.
\end{gather}

Each branch point $e$ of a hyperelliptic curve \eqref{Cg} is identified with a half-period,
see \cite[\S\,202]{bak897}
\begin{gather}\label{AbelMBranchP}
 \mathcal{A}(e) = \int_{e_0}^{e} \rmd v = \varepsilon/2 + \tau \varepsilon'/2,
 \qquad \begin{bmatrix} \varepsilon' \\ 
\varepsilon \end{bmatrix} = [\varepsilon],
\end{gather}
where components of $\varepsilon$ and $\varepsilon'$ are $0$ or $1$.
The integer $2{\times g}$-matrix $[\varepsilon]$ is the characteristic of the branch point $e$.
Characteristics are added $(\modR 2)$.
The theta function with characteristic $[\varepsilon]$ is defined by the formula
\begin{multline}\label{ThetaDef}
 \theta[\varepsilon](v;\tau) = \exp\big(\imath \pi (\varepsilon'/2) \tau \cdot (\varepsilon'/2)
 + 2 \imath \pi  (v+\varepsilon/2)\cdot(\varepsilon'/2)\big) \times \\ \times \theta(v+\varepsilon/2 + \tau \varepsilon'/2;\tau).
\end{multline}
A characteristic $[\varepsilon]$ is odd whenever $|\varepsilon|\equiv  \varepsilon \cdot \varepsilon'{}  = 1$ $(\modR 2)$, 
and even whenever $|\varepsilon|= 0$ $(\modR 2 )$. The theta function with characteristic
has the same parity as its characteristic.

\subsection{Characteristics in hyperelliptic case}\label{ss:CharHyper}
The method of constructing characteristics in hyperelliptic case is adopted from \cite[p.\,1012]{ER2008}.
It is based on the definition \eqref{AbelMBranchP} of half-period characteristics.
Let $[\varepsilon_k]$ be the characteristic of a branch point $e_k$.
Evidently, $[\varepsilon_{0}]=0$. Guided by the picture of canonical homology cycles, one can find
\begin{align*}
 &\mathcal{A}(e_{2g+1}) = \mathcal{A}(e_{0}) + \sum_{k=1}^g \int_{e_{2k-1}}^{e_{2k}} \rmd v &
 &[\varepsilon_{2g+1}] = \begin{bmatrix} 0 & 0 & \dots & 0 & 0 \\ 1& 1& \dots & 1 & 1\end{bmatrix}, & \\
 &\mathcal{A}(e_{2g}) = \mathcal{A}(e_{2g+1}) - \int_{e_{2g}}^{e_{2g+1}} \rmd v &
 &[\varepsilon_{2g}] = \begin{bmatrix} 0 & 0 & \dots & 0 & 1 \\ 1& 1& \dots & 1 & 1\end{bmatrix}, & \\
 &\mathcal{A}(e_{2g-1}) = \mathcal{A}(e_{2g}) - \int^{e_{2g}}_{e_{2g-1}} \rmd v&
 &[\varepsilon_{2g+1}] = \begin{bmatrix} 0 & 0 & \dots & 0 & 1 \\ 1& 1& \dots & 1 & 0 \end{bmatrix}, &
\intertext{for $k$ from $g-1$ to $2$}
 &\mathcal{A}(e_{2k}) = \mathcal{A}(e_{2k+1}) - \int_{e_{2k}}^{e_{2k+1}} \rmd v &
 &[\varepsilon_{2k}] = \text{\parbox[b]{4.85cm}{
$ \begin{matrix} 
 \multicolumn{4}{c}{\ \  \overbrace{\rule{2cm}{0pt}}^{k-1}} \end{matrix}$ \\
$ \begin{bmatrix} 
 0 & 0 & \dots & 0 & 1 & 0 & \dots & 0 \\ 
 1& 1& \dots & 1 & 1 & 0 & \dots & 0 \end{bmatrix}$ }}, & \\
 &\mathcal{A}(e_{2k-1}) = \mathcal{A}(e_{2k}) - \int^{e_{2k}}_{e_{2k-1}} \rmd v&
 &[\varepsilon_{2k-1}] = \text{\parbox[b]{4.85cm}{
$ \begin{matrix} 
 \multicolumn{4}{c}{\ \  \overbrace{\rule{2cm}{0pt}}^{k-1}} \end{matrix}$ \\
$ \begin{bmatrix} 
 0 & 0 & \dots & 0 & 1 & 0 & \dots & 0 \\ 
 1& 1& \dots & 1 & 0 & 0 & \dots & 0 \end{bmatrix}$ }}, &
\intertext{and finally}
 &\mathcal{A}(e_{2}) = \mathcal{A}(e_{3}) - \int_{e_{2}}^{e_{3}} \rmd v &
 &[\varepsilon_{2}] = \begin{bmatrix} 1 & 0 & \dots & 0  \\ 1& 0& \dots & 0 \end{bmatrix}, & \\
 &\mathcal{A}(e_{1}) = \mathcal{A}(e_{2}) - \int^{e_{2}}_{e_{1}} \rmd v &
 &[\varepsilon_{1}] = \begin{bmatrix} 1 & 0 & \dots & 0  \\ 0& 0& \dots & 0 \end{bmatrix}. &
 \end{align*}
This set of characteristics is azygetic in three and serves as a fundamental set, see 
\cite[\S\,202]{bak897}.

The characteristic $[K]$ equals 
the sum of all odd characteristics of branch points. There exist $g$ such characteristics,
see \cite[\S\,200--202]{bak897}.
Actually,
\begin{gather*}
 [K] = \sum_{k=1}^g [\varepsilon_{2k}].
\end{gather*}

\subsection{Characteristics and partitions}\label{ss:Part}
Let $\mathcal{I}\cup \J$  be a partition of the set of indices of all branch points $\{0,1,2,\dots, 2g+1\}$.
Denote by $[\varepsilon(\I)] = \sum_{i\in\I} [\varepsilon_i]$ the characteristic of
\begin{gather*}
 \mathcal{A} (\I) = \sum_{i\in\I} \mathcal{A}(e_i) = \frac{1}{2} \varepsilon_\I  + \frac{1}{2} \tau \varepsilon'_\I.
\end{gather*}

Characteristics of $2g+2$ branch points of \eqref{Cg} serve as a basis for constructing
all $2^{2g}$ half-period characteristics.
According to  \cite[\S\,202]{bak897} 
all half-period characteristics are represented by 
partitions of $2g+2$ indices of the form $\I_\mFr \cup \J_\mFr$ with $\I_\mFr = \{i_1,\,\dots,\, i_{g+1-2\mFr}\}$
and $\J_\mFr = \{j_1,\,\dots,\, j_{g+1+2\mFr}\}$, where $\mFr$ runs from $0$ to $[(g+1)/2]$, and $[\cdot]$ means the integer part. 
Number $\mFr$ is called \emph{multiplicity}, since 
$\theta(v+\mathcal{A} (\I_\mFr)+K)$ vanishes to order $\mFr$ at $v=0$,
according to the Riemann vanishing theorem.

Let $[\I_\mFr] \equiv [\delta(\I_\mFr)]  = [\varepsilon(\I_\mFr)] + [K]$, namely 
\begin{gather*}
 \sum_{i\in\I_\mFr} \mathcal{A}(e_i) + K = \frac{1}{2}\delta_{\I_\mFr} + \frac{1}{2}\tau \delta'_{\I_\mFr}.
\end{gather*}
The characteristic $[\I_\mFr]$  corresponds to the partition $\I_\mFr \cup \J_\mFr$. 
Note that $[\J_\mFr]$ represents the same characteristic as $[\I_\mFr]$.
Characteristics $[\I_\mFr]$ of even multiplicity $\mFr$ are even, and of odd $\mFr$ are odd.
Characteristics of multiplicity $0$ are called \emph{non-singular even characteristics},
there are $\binom{2g+1}{g}$ such characteristics.
There exist $\binom{2g+2}{g-1}$ characteristics of multiplicity $1$, which are called \emph{non-singular odd}.
All other characteristics are called \emph{singular}.
The number of characteristics of multiplicity $\mFr > 1$ is $\binom{2g+2}{g+1-2\mFr}$.

Characteristic $[K]$ corresponds to the partition $\{0\}\cup\{1,2,\dots,2g+1\}$ if $g$ is even,
or the partition $\emptyset \cup\{0, 1,2,\dots,2g+1\}$ if $g$ is odd. Such a partition is always unique, and
$\theta[K](v)$ vanishes to the maximal order $[(g+1)/2]$ at $v=0$.
In what follows, representation of characteristics in terms of partitions is preferable, 
because this makes clear which order of vanishing a theta function has at $v=0$.

Let a collection of branch points $\{e_i\mid i\in \mathcal{I}\}$ 
correspond to a partition $\I \cup \J$.
Then  $\Delta(\I)$ denotes the Vandermonde determinant in branch points from $\I$:
\begin{gather*}
 \Delta(\I) = \prod_{\substack{i > l \\ i,l \in \I}} (e_i-e_l).
\end{gather*}
Also  $s_n(\I)$ denotes the elementary symmetric polynomial of degree $n$ in  $\{e_i\mid i\in\I\}$,
Recall that  $s_n$ are defined by
\begin{gather*}
\sum_{n \geqslant 0} t^n s_n = \prod_{i\geqslant 0} (1+e_i t).
\end{gather*}

We denote all indices $\{i \mid i=0,\dots, 2g+1\}$ by $\I_{\text{All}}$, and
 the Vandermonde determinant in all branch points by $\Delta$.

\begin{rem}\label{R:NormMOrd}
 Let branch points in all factors $(e_i-e_l)$ be ordered in such a way that $i>l$, 
 we call this  \emph{normal ordering}. The operator $\Ord$ is used to introduce 
 the normal ordering in such factors.
 This allows to fix the multiplier $\epsilon$, $\epsilon^8=1$, which arises in many relations. 
 Such ordering was suggested by Baker \cite[p.\,346]{bak898}; in the case of all real branch points
 this guarantees that any Vandermonde determinant is positive.
\end{rem}

\subsection{G\"{o}pel groups and systems}
We briefly recall the theory of groups of characteristics developed by Frobenuis \cite{Frob1,Frob2},
see also \cite[Ch.\,XVII]{bak897}. 

Let $P=\big[\begin{smallmatrix} p' \\ p \end{smallmatrix} \big]$ and 
$Q=\big[\begin{smallmatrix} q' \\ q \end{smallmatrix} \big]$, then
$$ |P,Q| \overset{\text{def}}{=} pq'-p'q,\qquad  
|P,Q,R| \overset{\text{def}}{=} |P,Q| + |P,R| + |Q,R|. $$
$|P,Q|$ serves as an alternating bilinear form in the group of characteristics.
Two characteristics $P$ and $Q$ are called \emph{syzygetic} or 
\emph{azygetic} according as $|P,Q| = 0$ or $=1$ $(\modR 2)$.
Three characteristics $P$, $Q$, and $R$ are called \emph{syzygetic} or 
\emph{azygetic} according as $|P,Q,R| = 0$ or $=1$ $(\modR 2)$.

A group of pairwise zysygetic characteristics  is called a G\"{o}pel group.
Every G\"{o}pel group is generated by a finite number $r\leqslant g$ of characteristics, 
where $g$ is the number of columns in a 
characteristic also called the genus. 
A G\"{o}pel group generated by $r$ characteristics has rank $r$, and
contains $2^r$ elements. Employing the notation of  \cite{bak897}, we denote by $(P)$ a 
G\"{o}pel group of rank $r$ generated from $P_1$, $P_2$, \ldots, $P_r$.

By adding a characteristic $A$ to a G\"{o}pel group $(P)$ a G\"{o}pel system $(AP)$ is obtained.
G\"{o}pel systems derived from the same G\"{o}pel group have no characteristic in common,
they serve as  cosets of the G\"{o}pel  group.
Every G\"{o}pel group of rank $r$ split all $2^{2g}$ characteristics 
into $2^{2g-r}$  G\"{o}pel systems. 
Every three characteristics of a G\"{o}pel system are syzygetic, \cite[Art.\,297]{bak897}, 
similarly to a G\"{o}pel group.
There exist three types of G\"{o}pel systems: wholly even, wholly odd, and 
half odd-half even.  Let $s=g-r$.
There are $2^{s-1}(2^s+1)$ wholly even systems,  $2^{s-1}(2^s-1)$ wholly odd, and the remaining
$2^{2s}(2^r-1)$ systems are half odd-half even, see \cite[Art.\,299--300]{bak897} for more details.
If $r=g$, there exists one wholly even G\"{o}pel system and no odd G\"{o}pel system. 
If $r=g-1$, there are three  wholly even G\"{o}pel system and one odd G\"{o}pel system.

\subsection{Notation of theta constants}
In what follows we work with theta constants 
\begin{align*}
 &\theta[\I_0] = \theta[\I_0](0;\tau),&
 \intertext{and theta derivatives}
 &\partial_{v_i} \theta[\I_1] = \frac{\partial}{\partial v_i}  \theta[\I_1](v;\tau)\big|_{v=0},&\\
 &\partial^2_{v_i,v_j} \theta[\I_2] 
 = \frac{\partial^2}{\partial v_i\partial v_j}  \theta[\I_2](v;\tau)\big|_{v=0}	,&\\
&\text{etc.}&
\end{align*}
These are functions of $\tau$, which is usually omitted.
Characteristics are given in terms of partitions, for example $[\I_0]$, $[\I_1]$, $[\I_2]$.

We  denote by $\partial_v \theta [\I_1]$ a vector of first order theta derivatives $\partial_{v_i} \theta [\I_1]$,
$i=1$, \ldots, $g$, and by $\partial_v^\mFr \theta[\I_\mFr]$ a tensor of order $\mFr$ 
of  $\mFr$-th order theta derivatives with respect to all combinations constructed from $g$ components of $v$.
In particular, $\partial_v^2 \theta [\I_2]$ denotes the Hessian matrix of $\theta [\I_2]$.

Recall that theta function satisfies the system of heat equations
\begin{gather}\label{HEqTheta}
\partial_v^2 \theta [\I_2] = 4 \imath \pi \partial_\tau \theta [\I_2],
\end{gather}
here $\partial_\tau$ denotes a matrix operator with entries $\partial_{\tau_{i,j}}$.

\subsection{Thomae Formulas}\label{ThomaeF}
We recall the first and the second Thomae formulas, see \cite{ER2008,Tho870}, 
which give expressions for theta constants and first order theta derivatives. 
We also need expressions for second order theta derivatives, which are
given by the general Thomae formula, see \cite{bGTF}. All Thomae formulas below
are accommodated to the case of all finite branch points.

\newtheorem*{FTteo}{First Thomae theorem}
\begin{FTteo}
 Let $\I_0\cup \J_0$ with $\I_0=\{0$, $i_1$, \ldots, $i_{g}\}$ 
 and $\J_0 = \{j_1$, \ldots, $j_{g+1}\}$  be a partition
 of the set $\{0$, $1$, \ldots, $2g+1\}$ of indices of branch points, and  $[\I_0]$ denote 
 the non-singular even characteristic corresponding to $\mathcal{A} (\I_0) + K$.  Then
 \begin{gather}\label{thomae1}
 \theta[\I_0] = \epsilon \bigg(\frac{\det \omega}{\pi^g}\bigg)^{1/2} \Delta(\I_0)^{1/4} \Delta(\J_0)^{1/4}.
\end{gather}
where $\epsilon$ satisfies $\epsilon^8=1$,
and $\Delta(\I_0)$, $\Delta(\J_0)$ denote Vandermonde determinants 
built from $\{e_i\mid i\in \I_0\}$ and $\{e_j\mid j\in \J_0\}$.
\end{FTteo}

\newtheorem*{STteo}{Second Thomae theorem}
\begin{STteo}
 Let $\I_1\cup \J_1$ with $\I_1=\{i_1$, \ldots, $i_{g-1}\}$ and $\J_1 = \{j_1,\,\dots,\,j_{g+3}\}$ 
 be a partition of the set $\{0,1,2,\dots,2g+1\}$ of indices of branch points, and
 $[\I_1]$ denote the non-singular odd characteristic  
 corresponding to $\mathcal{A} (\I_1) + K$. Then for any $n \in \{1, \dots, g\}$
 \begin{equation}\label{thomae2}
 \partial_{v_n} \theta[\I_1] 
 = \epsilon \bigg(\frac{\det \omega}{\pi^g}\bigg)^{1/2} \Delta(\I_1)^{1/4} \Delta(\J_1)^{1/4} 
 \sum_{j=1}^g  (-1)^{j-1} s_{j-1}(\I_1) \omega_{n,j}.
\end{equation}
where  $\epsilon$ satisfies $\epsilon^8=1$, and $\Delta(\I_1)$, $\Delta(\J_1)$ denote
Vandermonde determinants built from $\{e_i\mid i\in \I_1\}$ and $\{e_j\mid j\in \J_1\}$,
and $s_{j}(\I)$ denotes the elementary symmetric polynomial of degree $j$ in $\{e_i\mid i\in \I\}$.
\end{STteo}

Taking into account the relation between normalized variables $v$ and not normalized variables $u$,
namely $u = v \omega$, $\partial_v = \omega \partial_u$,
the formula \eqref{thomae2} can be rewritten as follows
\begin{equation}\label{thomae2MFu}
 \begin{pmatrix} \partial_{u_1} \\ \partial_{u_2}  \\ \vdots \\ \partial_{u_{g}}  \end{pmatrix}  
 \theta[\I_1]
 =  \epsilon \bigg(\frac{\det \omega}{\pi^g}\bigg)^{1/2} \Delta(\I_1)^{1/4} \Delta(\J_1)^{1/4}
 \begin{pmatrix} s_0(\I_1) \\ - s_1(\I_1) \\ \vdots \\ (-1)^{g-1} s_{g-1}(\I_1) 
\end{pmatrix}.
\end{equation}

In the case of second order theta derivatives
\newtheorem*{GTteo}{Third Thomae theorem}
\begin{GTteo}
Let $\I_2\cup \J_2$ with $\I_1=\{i_1$, \ldots, $i_{g-3}\}$ and $\J_1 = \{j_1,\,\dots,\,j_{g+5}\}$ 
 be a partition of the set $\{0,1,2,\dots,2g+1\}$ of indices of branch points, and
 $[\I_2]$ denote the a characteristic of multiplicity $\mFr=2$ 
 corresponding to $\mathcal{A} (\I_2) + K$. Then for any $m$, $n \in \{1, \dots, g\}$
 \begin{gather}\label{thomaeM2}
 \partial^2_{v_m, v_n} \theta[\I_2] 
 = \epsilon \bigg(\frac{\det \omega}{\pi^g}\bigg)^{1/2} \Delta(\I_2)^{1/4} \Delta(\J_2)^{1/4} 
 \sum_{i,j=1}^g S_{i,j} \omega_{m,i} \omega_{n,j},\\
 S_{i,j} = (-1)^{i+j} \big(2 s_{i-2}(\I_2) s_{j-2}(\I_2) - s_{i-1}(\I_2) s_{j-3}(\I_2) - s_{i-3}(\I_2) s_{j-1}(\I_2) \big),
 \label{Smatr}
\end{gather}
where  $\epsilon$ satisfies $\epsilon^8=1$, and $\Delta(\I_2)$, $\Delta(\J_2)$ denote
Vandermonde determinants built from $\{e_i\mid i\in \I_2\}$ and $\{e_j\mid j\in \J_2\}$,
and $s_{k}(\I)$ denotes the elementary symmetric polynomial of degree $k$ in $\{e_i\mid i\in \I\}$,
$s_{k}(\I)=0$ if $k<0$.
\end{GTteo}

With non-normalized variables $u$
 \begin{gather}\label{thomae3MFu}
 \partial_{u}^2  \theta[\I_2](\omega^{-1} u) \big|_{u=0} 
 = \epsilon \bigg(\frac{\det \omega}{\pi^g}\bigg)^{1/2} \Delta(\I_2)^{1/4} \Delta(\J_2)^{1/4} \hat{S}[\I_2],
\end{gather}
where $\hat{S}$ denotes the matrix with entries $S_{i,j}$.

\begin{rem}\label{R:NormThomaeF}
If the normal ordering $\Ord$ on factors $(e_i-e_l)$ is applied, as explained in Remark~\ref{R:NormMOrd}, 
then $\epsilon$ is the same for all characteristics of the type,
namely
 \begin{align}
& \theta[\I_0] {=}  \bigg(\frac{\det \omega}{\pi^g}\bigg)^{1/2} 
 \Ord \big(\Delta(\I_0) \Delta(\J_0) \big)^{1/4},  \tag{\ref{thomae1}'}\label{thomae1Ord} \\
& \partial_{v_n} \theta[\I_1]
 {=} (-1)^g \bigg(\frac{\det \omega}{\pi^g}\bigg)^{1/2} \Ord \big(\Delta(\I_1) \Delta(\J_1)\big)^{1/4} 
 \sum_{j=1}^g  (-1)^{j-1} s_{j-1}(\I_1) \omega_{n,j},  \tag{\ref{thomae2}'}\label{thomae2Ord} \\
& \partial^2_{v_m, v_n} \theta[\I_2] 
 {=} - \bigg(\frac{\det \omega}{\pi^g}\bigg)^{1/2} \Ord \big(\Delta(\I_2) \Delta(\J_2)\big)^{1/4} 
 \sum_{i,j=1}^g S_{i,j}   \omega_{m,i} \omega_{n,j}.
\tag{\ref{thomaeM2}'}\label{thomaeM2Ord} 
\end{align}
\end{rem}

\section{Transformation of theta derivatives}\label{s:ThetaTransf}
As mentioned in Introduction, we consider modular forms vanishing on the hyperelliptic locus.
We construct such forms from monomials in theta constants with even characteristics, 
including singular. In genera $3$ and $4$ the highest multiplicity of characteristics is two.
Thus, all singular even characteristics are of the type $[\I_2]$.

As we focus our attention on vanishing modular forms, we will investigate their lowest 
non-vanishing derivatives. Such derivatives are \emph{vector-valued hyperelliptic modular forms},
since they obey transformation laws under the action of
the modular group, as shown below in this section.

\begin{Def}
We define the extended map $\rho$ on the ring of polynionals in theta constants and
theta derivatives within the hyperelliptic locus of $\mathfrak{S}_g$. 
The map brings such polynionals to the ring of homogeneous functions in roots of a binary $2g+2$-form,
images are given by Thomae formulas \eqref{thomae1}, \eqref{thomae2}, \eqref{thomaeM2}.
\end{Def}
Since in the present paper we consider genera not higher than $4$, 
the formula \eqref{thomaeM2}, called here the Third Thomae formula, is sufficient.
The reader may find a more general result, called the General Thomae formula, in \cite{bGTF}.

The proofs presented below
are based on Thomae formulas, which make them simple and elegant. Such an  approach 
can be considered as an alternative to the proof given in \cite{Ig1964}.
Since we work in the hyperelliptic locus, this type of proofs is reasonable and sufficient.

\begin{theo}\label{CMFGSI0}
Let $(A)$ be a collection of $\dd$ non-singular even characteristics, which 
gives rise to a monomial of non-vanishing theta constants of degree~$\dd$
\begin{gather}\label{GSTheta}
\psi_{\dd/2}(\tau; (A)) =  \prod_{[\I_0] \in (A)} \theta[\I_0].
\end{gather}
Then $\psi_{\dd/2}$ is a monomial is a modular form of weight $\dd/2$: 
\begin{gather}\label{I0PsiModTrans}
\forall \gamma \in \Gamma_g \quad \psi_{\dd/2}(\gamma \langle \tau \rangle; \gamma^{-1}(A)) = 
\epsilon^{\dd} \det (c \tau+d)^{\dd/2} \psi_{\dd/2}(\tau;  (A)),
\end{gather}
where $\epsilon^8 = 1$.
\end{theo}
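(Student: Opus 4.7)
The plan is to apply the First Thomae formula \eqref{thomae1} to every factor in \eqref{GSTheta}, on both sides of the identity \eqref{I0PsiModTrans}, and compare. The crucial observation is that the Vandermonde factors $\Delta(\I_0)^{1/4}\Delta(\J_0)^{1/4}$ depend only on the branch points $\{e_j\}$, which are intrinsic invariants of the hyperelliptic curve and independent of the choice of canonical homology basis. Thus the entire effect of a modular transformation on a Thomae image is concentrated in the analytic prefactor $(\det\omega/\pi^g)^{1/2}$.

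First, from \eqref{GammaTau} one has $\gamma\langle\omega\rangle=(c\tau+d)\omega$, hence
\begin{gather*}
\det\bigl(\gamma\langle\omega\rangle\bigr)=\det(c\tau+d)\,\det\omega.
\end{gather*}
Second, by the partition description of Section~\ref{ss:CharHyper}, the natural action of $\Gamma_g$ on characteristics is such that, in the new homology basis, the characteristic attached to the partition $\I_0\cup\J_0$ becomes $\gamma^{-1}[\I_0]$. Consequently, applying \eqref{thomae1} at $\gamma\langle\tau\rangle$ to $\theta[\gamma^{-1}(\I_0)]$ yields the same Vandermonde factors $\Delta(\I_0)^{1/4}\Delta(\J_0)^{1/4}$ as applying \eqref{thomae1} at $\tau$ to $\theta[\I_0]$, with only the replacement $\det\omega\mapsto\det(c\tau+d)\det\omega$ in the prefactor.

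Multiplying the $\dd$ factors and collecting the scalar data gives
\begin{gather*}
\psi_{\dd/2}(\gamma\langle\tau\rangle;\gamma^{-1}(A))
=\epsilon^{\dd}\det(c\tau+d)^{\dd/2}\prod_{[\I_0]\in(A)}\Bigl(\frac{\det\omega}{\pi^g}\Bigr)^{1/2}\Delta(\I_0)^{1/4}\Delta(\J_0)^{1/4},
\end{gather*}
and a second application of \eqref{thomae1}, read from right to left, identifies the remaining product with $\psi_{\dd/2}(\tau;(A))$, producing \eqref{I0PsiModTrans}. The overall factor $\epsilon^{\dd}$ is uniform after adopting the normal ordering of Remark~\ref{R:NormMOrd} (see Remark~\ref{R:NormThomaeF}); without that convention one still obtains a product of $\dd$ eighth roots of unity, which is again an eighth root of unity.

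The main subtlety I expect to address carefully is the equivariance of the correspondence \emph{characteristic $\leftrightarrow$ branch-point partition} under the $\Gamma_g$-action: one must check that in the transformed homology basis the partition $\I_0\cup\J_0$ really is labelled by $\gamma^{-1}[\I_0]$, so that the Vandermonde contributions match on the nose. This reduces to the intrinsic nature of Abel's map \eqref{AbelMBranchP} together with the action \eqref{GammaTau} of $\Gamma_g$ on the period matrices, after which the remaining computation is routine.
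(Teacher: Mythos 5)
Your proposal is correct and follows essentially the same route as the paper: apply the First Thomae formula to each factor, use \eqref{GammaTau} to track $\det\omega\mapsto\det(c\tau+d)\det\omega$, and observe that the Vandermonde factors are unchanged because the compensating action $\gamma^{-1}$ on the characteristics preserves the underlying branch-point partitions. The equivariance subtlety you flag is likewise left implicit in the paper's proof, which simply invokes ``the complementary action on characteristics.''
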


\begin{proof}
Applying the First Thomae theorem to $f_{\dd/2}(\tau; (A))$, we obtain
\begin{gather}\label{GSThomae}
\psi_{\dd/2}(\tau; (A)) = \Big(\frac{\det \omega}{\pi^g}\Big)^{\dd/2} 
\prod_{[\I_0] \in (A)}  \big(\Delta[\I_0] \Delta[\J_0]\big)^{1/4},
\end{gather}
where $\omega$ is a not normalized period matrix. 
Recall, how $\Gamma_g$ acts on a characteristic,
see \cite[p.\,226]{Ig1964}:
\begin{align*}
 & [\gamma^{-1}\varepsilon] = 
 \begin{bmatrix} d \varepsilon' - c \varepsilon + \diag (c^t d) \\
- b \varepsilon'  + a \varepsilon + \diag (a^t b) \end{bmatrix},
\end{align*}
where $\diag (\cdot)$ means taking the main diagonal of a square matrix in the argument.
Now, we apply 
a modular transformation $\gamma$ to $\tau$ and $\omega$, 
according to  \eqref{GammaTau},
and the complementary action
to characteristics $\gamma^{-1} (A)$, and find 
\begin{gather*}
\psi_{\dd/2} \big(\gamma \langle \tau \rangle; \gamma^{-1} (A) \big) = 
\epsilon^{\dd} \big(\!\det (c\tau+d) \big)^{\dd/2} \Big(\frac{\det \omega}{\pi^g}\Big)^{\dd/2} 
\prod_{[\I_0] \in (A)}  \big(\Delta[\I_0] \Delta[\J_0]\big)^{1/4}.
\end{gather*}
Simplifying the right hand side by means of \eqref{GSThomae}, we   
come to \eqref{I0PsiModTrans}.
\end{proof}

\begin{rem}
The formula \eqref{GSThomae} defines the map $\rho$  from monomials in theta constants into
algebraic functions of the roots $\{e_i \mid i=0,\dots,2g+1\}$ of a binary form $f(x,0)$ of degree $2g+2$,
cf. \eqref{Cg}. Note, that Thomae formulas, as well as \eqref{GSThomae},
determine unambiguously which forth root of $\Delta[\I_0] \Delta[\J_0]$ is taken: the one
which gives equality with theta functions on the left hand side. We assume, that 
the root is always chosen correctly.
\end{rem}

\begin{lemma}\label{CMFI2}
Let $[\I_2]$ be a singular even characteristic of multiplicity $2$, which vanishes
 to the first order on the hyperelliptic locus of $\mathfrak{S}_g$.
Then entries of $\partial_{\tau} \theta [\I_2 ]$ obey the transformation law
\begin{gather}\label{I2ModTrans}
\forall \gamma\in \Gamma_g \quad
\partial_{\tau} \theta [\gamma^{-1}  \I_2 ] (\gamma \langle \tau \rangle) = 
\epsilon \big(\!\det (c\tau+d) \big)^{1/2} (c\tau+d)
\big( \partial_{\tau} \theta [\I_2 ] (\tau) \big) (c\tau+d)^t,
\end{gather}
where $\epsilon^8=1$.
\end{lemma}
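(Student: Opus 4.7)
The strategy is to convert the $\tau$-derivative into the $v$-Hessian via the heat equation \eqref{HEqTheta}, apply the Third Thomae theorem \eqref{thomaeM2Ord} in matrix form, and then read off the transformation from the known law $\gamma\langle\omega\rangle=(c\tau+d)\omega$ of the not normalized period matrix, given by \eqref{GammaTau}. This mirrors the structure of the proof of Theorem~\ref{CMFGSI0}: the right-hand side of each Thomae formula factors into a $\tau$-dependent piece (carried entirely by $\omega$) and a curve-intrinsic piece (built from the branch points and the partition).

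First I would use $4\imath\pi\,\partial_\tau\theta[\I_2]=\partial_v^2\theta[\I_2]$ to reduce the claim \eqref{I2ModTrans} to the corresponding identity for the Hessian $\partial_v^2\theta[\I_2]$. Then I would invoke \eqref{thomaeM2Ord} in matrix form,
\[
\partial_v^2\theta[\I_2](\tau)=-\Bigl(\frac{\det\omega}{\pi^g}\Bigr)^{1/2}\Ord\bigl(\Delta(\I_2)\Delta(\J_2)\bigr)^{1/4}\,\omega\,\hat S[\I_2]\,\omega^t,
\]
noting that the only $\tau$-dependent factor on the right is $\omega$, while $\Ord(\Delta(\I_2)\Delta(\J_2))^{1/4}$ and $\hat S[\I_2]$ are determined by the partition $\I_2\cup\J_2$ of the branch-point indices and are therefore intrinsic to the curve.

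Next I would apply $\gamma\in\Gamma_g$ on both sides. By \eqref{GammaTau}, $\omega$ is replaced by $(c\tau+d)\omega$, so $(\det\omega)^{1/2}$ picks up the factor $\det(c\tau+d)^{1/2}$ and the bilinear form $\omega\hat S[\I_2]\omega^t$ is conjugated to $(c\tau+d)\,\omega\hat S[\I_2]\omega^t\,(c\tau+d)^t$. Simultaneously, the characteristic $[\I_2]$ is relabeled to $[\gamma^{-1}\I_2]$, corresponding to a new partition $\gamma^{-1}\I_2\cup\gamma^{-1}\J_2$; since the underlying curve and its set of branch points are unaffected, applying the Third Thomae theorem to $\partial_v^2\theta[\gamma^{-1}\I_2](\gamma\langle\tau\rangle)$ yields the same scalar $\Ord(\Delta\cdot\Delta)^{1/4}$ and the same matrix $\hat S$ up to an overall $8$-th root of unity $\epsilon$. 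Matching the two expressions gives the Hessian identity
\[
\partial_v^2\theta[\gamma^{-1}\I_2](\gamma\langle\tau\rangle)=\epsilon\det(c\tau+d)^{1/2}(c\tau+d)\bigl(\partial_v^2\theta[\I_2](\tau)\bigr)(c\tau+d)^t,
\]
and the heat equation translates this back to $\partial_\tau\theta$, producing \eqref{I2ModTrans}.

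The main obstacle I anticipate is the careful bookkeeping of the $8$-th root of unity $\epsilon$, i.e.\ verifying that the normalized product $\Ord(\Delta(\I_2)\Delta(\J_2))^{1/4}$ and the matrix $\hat S[\I_2]$, once rewritten for the relabeled partition $\gamma^{-1}\I_2\cup\gamma^{-1}\J_2$, genuinely coincide with their originals up to a single factor with $\epsilon^8=1$. This is a phase/sign argument resting on Remark~\ref{R:NormThomaeF} and on the fact that a modular transformation of the hyperelliptic locus induces a permutation of branch-point indices preserving the partition structure; once it is settled, the remaining manipulation is a routine substitution.
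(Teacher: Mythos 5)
Your proposal is correct and follows essentially the same route as the paper's own proof: both convert $\partial_\tau\theta[\I_2]$ to the Hessian $\partial_v^2\theta[\I_2]$ via the heat equation \eqref{HEqTheta}, write it in matrix form by the Third Thomae theorem as $-\bigl(\tfrac{\det\omega}{\pi^g}\bigr)^{1/2}\bigl(\Delta[\I_2]\Delta[\J_2]\bigr)^{1/4}\,\omega\hat S(\I_2)\omega^t$, and obtain the transformation law from $\omega\mapsto(c\tau+d)\omega$ in \eqref{GammaTau} while the curve-intrinsic factors are unchanged up to $\epsilon$ with $\epsilon^8=1$. Your extra attention to the bookkeeping of $\epsilon$ under relabeling of the partition is a point the paper leaves implicit, but it does not change the argument.
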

\begin{proof}
From the Third Thomae theorem, see subsection~\ref{ThomaeF}, we know, that
\begin{gather}\label{GSThomaeSng}
\partial^2_{v} \theta [\I_2] = - \Big(\frac{\det \omega}{\pi^g}\Big)^{1/2} 
  \big(\Delta[\I_2] \Delta[\J_2]\big)^{1/4} \omega \hat{S}(\I_2) \omega^t,
\end{gather}
in the matrix form. The matrix $\hat{S}(\I_2)$ has entries $S_{i,j}$ defined by \eqref{Smatr},
which depend only on $\{e_i | i \in \I_2\}$. Taking into account \eqref{GammaTau}, 
we find:
\begin{gather*}
\begin{split}
\partial^2_{v} \theta [\gamma^{-1}  \I_2 ] (\gamma \langle \tau \rangle) &= 
\big(\!\det (c \tau + d) \big)^{1/2} \Big(\frac{\det \omega}{\pi^g}\Big)^{1/2}   \times \\
  &\qquad \times 
  \big(\Delta[\I_2 ] \Delta[\J_2 ]\big)^{1/4} 
  (c \tau + d) \big(\omega \hat{S}( \I_2 ) \omega^t \big) (c \tau + d)^t.
\end{split}
\end{gather*}
Finally,
\begin{gather*}
\partial^2_{v} \theta [\gamma^{-1}  \I_2 ] (\gamma \langle \tau \rangle) = 
\big(\!\det (c \tau + d) \big)^{1/2} (c \tau + d)
\big( \partial^2_{v} \theta [ \I_2 ] (\tau) \big) (c \tau + d)^t.
\end{gather*}
Since theta functions satisfy $\partial^2_{v} \theta = 4\imath \pi \partial_\tau \theta$, 
cf.\,\eqref{HEqTheta}, we obtain \eqref{I2ModTrans}.
\end{proof}

\begin{theo}\label{CMFGSI2}
Let $(A)$ be a collection of $\dd-1$ non-singular even characteristics 
and one singular even characteristic $[\I_2]$ of multiplicity $2$. 
Then  $(A)$ gives rise to a theta monomial of degree $\dd$
\begin{gather}\label{GSThetaSng}
\psi_{\dd/2;1}(\tau; (A)) =  \theta[\I_2] \prod_{[\I_0] \in (A)} \theta[\I_0],
\end{gather}
which vanishes to the first order on the hyperelliptic locus of $\mathfrak{S}_g$;
entries of $\partial_{\tau} \psi_{\dd/2;1}$  obey  the transformation law
\begin{multline}\label{I2PsiModTrans}
\forall \gamma \in \Gamma_g \quad
\partial_{\tau} \psi_{\dd/2;1}(\gamma\langle \tau \rangle;\gamma^{-1}(A)) \\ = 
\epsilon^{\dd} \big(\!\det (c\tau+d) \big)^{1/2} (c \tau+d)
\big( \partial_{\tau}   \psi_{\dd/2;1}(\tau; (A)) \big) (c \tau+d)^t,
\end{multline} 
where  $\epsilon^8=1$.
\end{theo}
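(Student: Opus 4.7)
The plan is to combine Theorem \ref{CMFGSI0} and Lemma \ref{CMFI2} via the Leibniz rule, exploiting the fact that $\theta[\I_2]$ vanishes on the hyperelliptic locus while the non-singular theta constants do not.

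First I would write $\psi_{\dd/2;1}(\tau; (A)) = \theta[\I_2](\tau)\, \phi(\tau; (A'))$, where $(A') = (A) \setminus \{[\I_2]\}$ is the collection of the $\dd-1$ non-singular even characteristics and $\phi(\tau;(A')) = \prod_{[\I_0]\in (A')} \theta[\I_0]$. By the First Thomae theorem \eqref{thomae1Ord}, every factor $\theta[\I_0]$ is non-vanishing on the hyperelliptic locus, so $\phi$ is non-vanishing there. Since $\theta[\I_2]$ vanishes identically on the hyperelliptic locus, the monomial $\psi_{\dd/2;1}$ vanishes on this locus, and because $\phi \neq 0$, it vanishes to exactly the first order. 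This establishes the vanishing claim.

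Next, applying the Leibniz rule to $\partial_{\tau}$,
\begin{equation*}
\partial_{\tau} \psi_{\dd/2;1}(\tau;(A)) = \bigl(\partial_{\tau}\theta[\I_2](\tau)\bigr)\phi(\tau;(A')) + \theta[\I_2](\tau)\,\partial_{\tau}\phi(\tau;(A')).
\end{equation*}
When restricted to the hyperelliptic locus the second summand disappears because $\theta[\I_2]=0$ there, so the lowest non-vanishing derivative is
\begin{equation*}
\partial_{\tau}\psi_{\dd/2;1}(\tau;(A)) \big|_{\mathrm{hyp}} = \bigl(\partial_{\tau}\theta[\I_2](\tau)\bigr)\,\phi(\tau;(A')).
\end{equation*}
Applying $\gamma \in \Gamma_g$ to $\tau$ and the complementary action $\gamma^{-1}$ to the characteristics, Lemma \ref{CMFI2} transforms the matrix factor $\partial_{\tau}\theta[\I_2]$ by $\epsilon\,\det(c\tau+d)^{1/2}(c\tau+d)(\,\cdot\,)(c\tau+d)^t$, while Theorem \ref{CMFGSI0} applied to the scalar factor $\phi$ (a non-vanishing monomial of degree $\dd-1$) produces the multiplier $\epsilon^{\dd-1}\det(c\tau+d)^{(\dd-1)/2}$. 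Multiplying the two contributions collects a total scalar $\epsilon^{\dd}\det(c\tau+d)^{\dd/2}$ together with the conjugation $(c\tau+d)(\cdot)(c\tau+d)^t$, which is exactly the transformation law \eqref{I2PsiModTrans}.

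The only subtlety, and the main point one has to be careful about, is that the Leibniz identity above is an identity on all of $\mathfrak{S}_g$, but the transformation law is asserted on the hyperelliptic locus; once the $\theta[\I_2]\partial_\tau\phi$ term is discarded, one must confirm that the factorization of the transformation into the individual transformations of $\partial_\tau\theta[\I_2]$ and of $\phi$ is consistent with the action of $\gamma$ on the characteristics $(A)$, i.e.\ that $\gamma^{-1}$ sends the singular characteristic $[\I_2]$ to a characteristic of the same multiplicity and the non-singular block $(A')$ to a collection of non-singular even characteristics. This is guaranteed by the fact that the alternating bilinear form $|\,,\,|$ and hence the multiplicity of an even characteristic are invariant under the $\Gamma_g$-action on characteristics, so both Theorem \ref{CMFGSI0} and Lemma \ref{CMFI2} apply to the transformed collection and yield the claim.
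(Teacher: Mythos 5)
Your proof is correct and follows essentially the same route as the paper's: factor out the singular theta constant, drop the Leibniz term $\theta[\I_2]\,\partial_\tau\phi$ on the hyperelliptic locus, and combine Lemma~\ref{CMFI2} with Theorem~\ref{CMFGSI0}; you merely make explicit two points the paper leaves implicit (the discarded Leibniz term and the $\Gamma_g$-invariance of multiplicity). Note that your combined determinant factor $\det(c\tau+d)^{\dd/2}$ is the one actually forced by the two ingredient transformations (and matches the analogous genus-independent law \eqref{2I2PsiModTrans}), so the exponent $1/2$ printed in \eqref{I2PsiModTrans} appears to be a typo rather than an error on your side.
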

\begin{proof}
Differentiating \eqref{GSThetaSng}, we find
\begin{gather}\label{GSThetaSngDer}
\partial_\tau \psi_{\dd/2;1}(\tau; (A)) =  (\partial_\tau \theta[\I_2]) 
\prod_{[\I_0] \in (A)} \theta[\I_0].
\end{gather}
Here, $\partial_\tau \theta[\I_2]$ obeys Lemma~\ref{CMFI2},
and the remaining product  obeys Theorem~\ref{CMFGSI0}.
Combining the two transformations \eqref{I2ModTrans} and \eqref{I0PsiModTrans},
 we obtain \eqref{I2PsiModTrans}.
\end{proof}

\begin{lemma}\label{CMF2I2}
Let $[\I_2^{(1)}]$, $[\I_2^{(2)}]$ be  two singular even characteristic of multiplicity~$2$.
Then the product $\theta [\I_2^{(1)} ]\,  \theta [\I_2^{(2)} ]$ vanishes to the second order 
on the hyperelliptic locus of $\mathfrak{S}_g$, and all entries of 
$\partial^{\otimes 2}_{\tau} \big(\theta [\I_2^{(1)} ]\,  \theta [\I_2^{(2)} ]\big)$ 
obey the  transformation law
\begin{multline}\label{2I2ModTrans}
\forall \gamma \in \Gamma_g \quad \partial^{\otimes 2}_{\tau} 
\big(\theta [\gamma^{-1} \I_2^{(1)}](\gamma \langle \tau \rangle)  
\,\theta [\gamma^{-1} \I_2^{(2)}] (\gamma \langle \tau \rangle) \big) = 
\epsilon^2 \big(\!\det (c \tau+d) \big) \times \\ \times
  (c \tau+d)^{\otimes 2} \partial^{\otimes 2}_{\tau} \big(\theta [\I_2^{(1)}](\tau)
\,\theta [\I_2^{(2)}] (\tau) \big)
((c \tau+d)^t)^{\otimes 2},
\end{multline}
where  $(c \tau+d)^{\otimes 2}= (c \tau+d) \otimes (c \tau+d)$, and $\epsilon^8=1$.
\end{lemma}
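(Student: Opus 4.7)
The plan is to reduce the claim to Lemma \ref{CMFI2} by the Leibniz rule, exploiting that both factors vanish on the hyperelliptic locus so that only the cross-term involving first derivatives survives.

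First I would observe that, by the Third Thomae theorem (or equivalently by the Riemann vanishing theorem, since $[\I_2^{(i)}]$ has multiplicity $2$), each factor $\theta[\I_2^{(i)}]$ already vanishes on the hyperelliptic locus of $\mathfrak{S}_g$, so the product vanishes to at least second order; the nontriviality of the first derivatives given by \eqref{thomaeM2} shows that the order is exactly $2$. Next I would apply the Leibniz rule componentwise to $\partial^{\otimes 2}_\tau (\theta[\I_2^{(1)}] \theta[\I_2^{(2)}])$. This produces four terms: two of the form (second derivative)$\cdot$(theta constant) and two of the form (first derivative)$\otimes$(first derivative). On the hyperelliptic locus the two terms carrying an undifferentiated theta constant vanish, and one is left with
\begin{gather*}
\partial^{\otimes 2}_\tau \bigl( \theta[\I_2^{(1)}] \, \theta[\I_2^{(2)}] \bigr)
= \bigl( \partial_\tau \theta[\I_2^{(1)}] \bigr) \otimes \bigl( \partial_\tau \theta[\I_2^{(2)}] \bigr)
+ \bigl( \partial_\tau \theta[\I_2^{(2)}] \bigr) \otimes \bigl( \partial_\tau \theta[\I_2^{(1)}] \bigr).
\end{gather*}

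Then I would plug in the transformation rule from Lemma \ref{CMFI2} for each of the two factors. Each produces one factor of $(\det(c\tau+d))^{1/2}$ together with a conjugation by $(c\tau+d)$ on the left and $(c\tau+d)^t$ on the right, plus a root of unity $\epsilon$ with $\epsilon^8 = 1$. Taking the tensor product multiplies the scalar prefactors and reorganizes the matrix conjugations into the Kronecker-product form $(c\tau+d)^{\otimes 2}(\,\cdot\,)((c\tau+d)^t)^{\otimes 2}$, which is precisely \eqref{2I2ModTrans}. The symmetric pair of cross terms transforms uniformly, so the full sum transforms in the stated way.

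The only delicate point is the bookkeeping of the Kronecker products: one has to verify the identity
\begin{gather*}
\bigl[(c\tau+d) A (c\tau+d)^t\bigr] \otimes \bigl[(c\tau+d) B (c\tau+d)^t\bigr]
= (c\tau+d)^{\otimes 2} \, (A \otimes B) \, \bigl((c\tau+d)^t\bigr)^{\otimes 2},
\end{gather*}
which is the standard mixed-product property of Kronecker products, and to check that the roots-of-unity factors from the two applications of Lemma \ref{CMFI2} combine into a single $\epsilon^2$ with $\epsilon^8 = 1$. I do not expect a genuine obstacle: once the Leibniz reduction is written down, the lemma is an immediate consequence of Lemma \ref{CMFI2}.
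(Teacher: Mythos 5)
Your proposal is correct and follows essentially the same route as the paper: both proofs reduce $\partial^{\otimes 2}_{\tau}\big(\theta[\I_2^{(1)}]\theta[\I_2^{(2)}]\big)$ via the Leibniz rule to the symmetric sum $\big(\partial_\tau \theta[\I_2^{(1)}]\big)\otimes\big(\partial_\tau \theta[\I_2^{(2)}]\big)+\big(\partial_\tau \theta[\I_2^{(2)}]\big)\otimes\big(\partial_\tau \theta[\I_2^{(1)}]\big)$ and then transport the transformation of each factor through the Kronecker product via the mixed-product rule $(a\otimes b)(c\otimes d)=(ac)\otimes(bd)$. The only cosmetic difference is that you invoke Lemma~\ref{CMFI2} as a black box for each factor, whereas the paper re-expands both factors through the Third Thomae theorem before applying the action \eqref{GammaTau} of $\Gamma_g$ on $\omega$ --- the underlying computation is the same.
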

\begin{proof}
Recall that  $\partial^{\otimes 2}_{\tau} = (\partial_\tau) \otimes (\partial_\tau)$
is an order $4$ tensor operator.
Evidently, $\theta [\I_2^{(1)} ]\,  \theta [\I_2^{(2)} ]$ vanishes to the second order, 
since each factor
$\theta[\I_2^{(1)}]$, $\theta[\I_2^{(2)}]$ vanishes to the first order, and
\begin{gather*}
\partial^{\otimes 2}_{\tau}\big(\theta[\I_2^{(1)}] \theta[\I_2^{(2)}] \big) = 
\big(\partial_\tau \theta[\I_2^{(1)}]\big) \otimes \big( \partial_\tau \theta[\I_2^{(2)}]\big) 
+ \big(\partial_\tau \theta[\I_2^{(2)}]\big) \otimes \big( \partial_\tau \theta[\I_2^{(1)}]\big).
\end{gather*}
Then we apply  the Third Thomae theorem and obtain
\begin{multline}\label{GSThomae2Sng}
\partial^{\otimes 2}_{\tau}\big(\theta[\I_2^{(1)}] \theta[\I_2^{(2)}] \big) =
\frac{1}{(4\imath \pi)^2} 
\Big(\frac{\det \omega}{\pi^g}\Big)
\big(\Delta[\I_2^{(1)}] \Delta[\J_2^{(1)}] \Delta[\I_2^{(2)}] \Delta[\J_2^{(2)}]\big)^{1/4} \times \\ 
\times \big(\omega \otimes \omega\big)
\Big( \hat{S}(\I_2^{(1)}) \otimes  \hat{S}(\I_2^{(2)}) 
+  \hat{S}(\I_2^{(2)}) \otimes \big(\hat{S}(\I_2^{(1)}) \Big) \big(\omega^t \otimes \omega^t\big),
\end{multline}
where multiplication on tensor products obeys the rule $(a\otimes b) (c \otimes d) = (a c) \otimes (bd)$.
Taking into account \eqref{GammaTau}, 
we find the transformation \eqref{2I2ModTrans} under the action of $\Gamma_g$.
\end{proof}

Lemma~\ref{CMF2I2} is easily extended to a product of an arbitrary number 
of theta constants with singular even characteristics.

\begin{theo}\label{CMFGS2I2}
Let $(A)$ be collection of $\dd-2$ non-singular even characteristics 
and two singular even characteristic $[\I_2^{(1)}]$, $[\I_2^{(2)}]$ of multiplicity $2$. 
Then  $(A)$ gives rise to a theta monomial of degree $\dd$
\begin{gather}\label{GSTheta2Sng}
\psi_{\dd/2;2}(\tau; (A)) 
= \theta[\I_2^{(1)}] \theta[\I_2^{(2)}] \prod_{[\I_0] \in (A)} \theta[\I_0],
\end{gather}
which vanishes to the second order on the hyperelliptic locus of $\mathfrak{S}_g$,
and entries of $\partial^{\otimes 2}_{\tau} \psi_{\dd/2;2}$
obeys the transformation law
\begin{multline}\label{2I2PsiModTrans}
\partial^{\otimes 2}_{\tau} \psi_{\dd/2;2}(\gamma \langle \tau \rangle; \gamma^{-1}(A)) = 
\epsilon^{\dd} \big(\!\det (c\tau+d) \big)^{\dd/2} \times \\
\times  (c\tau+d)^{\otimes 2}
\big(\partial^{\otimes 2}_{\tau} \psi_{\dd/2;2}(\tau; (A)) \big)
((c\tau+d)^t)^{\otimes 2},
\end{multline}
where $(c\tau+d)^{\otimes 2}= (c\tau+d) \otimes (c\tau+d)$, and $\epsilon^8=1$.
\end{theo}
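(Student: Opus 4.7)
The plan is to reduce this statement to Theorem~\ref{CMFGSI0} and Lemma~\ref{CMF2I2} via a Leibniz expansion, pruning all terms that still carry a factor vanishing on the hyperelliptic locus. So the first step is to expand
\begin{gather*}
\partial^{\otimes 2}_{\tau} \psi_{\dd/2;2}(\tau; (A))
= \partial^{\otimes 2}_{\tau}\bigg(\theta[\I_2^{(1)}] \theta[\I_2^{(2)}] \prod_{[\I_0]\in(A)} \theta[\I_0]\bigg)
\end{gather*}
by the Leibniz rule. On the hyperelliptic locus $\theta[\I_2^{(1)}]$ and $\theta[\I_2^{(2)}]$ vanish identically, so every Leibniz summand in which at least one of these two factors appears undifferentiated is zero there. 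A direct inspection shows that the only surviving contributions are those in which exactly one $\partial_\tau$ falls on each of $\theta[\I_2^{(1)}]$ and $\theta[\I_2^{(2)}]$. In particular, $\psi_{\dd/2;2}$ and its first $\tau$-derivatives vanish on the hyperelliptic locus, while
\begin{gather*}
\partial^{\otimes 2}_{\tau} \psi_{\dd/2;2}\big|_{\mathrm{HE}}
= \bigg(\prod_{[\I_0]\in(A)} \theta[\I_0]\bigg) \cdot \partial^{\otimes 2}_{\tau}\big(\theta[\I_2^{(1)}] \theta[\I_2^{(2)}]\big),
\end{gather*}
which establishes the second-order vanishing claim.

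Next, I would compute the modular transform by applying the two known laws factor-wise. The scalar factor $\prod_{[\I_0]\in(A)} \theta[\I_0]$ is a monomial of degree $\dd-2$ in non-singular theta constants, so by Theorem~\ref{CMFGSI0} it transforms under $\gamma\in\Gamma_g$ with weight $(\dd-2)/2$ and a unit $\epsilon^{\dd-2}$. The tensor factor $\partial^{\otimes 2}_{\tau}(\theta[\I_2^{(1)}]\theta[\I_2^{(2)}])$ transforms under $\gamma$ by Lemma~\ref{CMF2I2}, producing a scalar $\epsilon^{2}\det(c\tau+d)$ and the tensorial sandwich $(c\tau+d)^{\otimes 2}(\,\cdot\,)((c\tau+d)^{t})^{\otimes 2}$. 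Multiplying these two transformation laws on the hyperelliptic locus gives the total scalar $\epsilon^{\dd}\big(\det(c\tau+d)\big)^{\dd/2}$ together with the required tensorial conjugation, which is exactly \eqref{2I2PsiModTrans}.

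The only real subtlety, and the place that deserves care, is the Leibniz bookkeeping at step one: one must verify that the mixed terms $(\partial_\tau\theta[\I_2^{(i)}])\otimes \partial_\tau(\prod\theta[\I_0])$ and $\theta[\I_2^{(1)}]\theta[\I_2^{(2)}]\,\partial^{\otimes 2}_{\tau}(\prod\theta[\I_0])$ genuinely vanish on the hyperelliptic locus (the former retains a factor $\theta[\I_2^{(j)}]$ with $j\neq i$, the latter retains both singular factors), so that the identity above holds on the nose rather than up to lower-order terms. Once this is settled, no new computation is needed: the two preceding results assemble the desired law, and the scalar $\epsilon$'s combine as $\epsilon^{2}\cdot\epsilon^{\dd-2}=\epsilon^{\dd}$ up to the common ambiguity $\epsilon^{8}=1$. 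The argument extends verbatim, and will be used later, to a product containing any fixed number of singular characteristics of multiplicity two, replacing Lemma~\ref{CMF2I2} by its obvious generalization.
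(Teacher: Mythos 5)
Your proposal is correct and follows essentially the same route as the paper: differentiate the monomial, observe that on the hyperelliptic locus only the term with one $\partial_\tau$ on each singular factor survives, and then combine Lemma~\ref{CMF2I2} for $\partial^{\otimes 2}_{\tau}\big(\theta[\I_2^{(1)}]\theta[\I_2^{(2)}]\big)$ with Theorem~\ref{CMFGSI0} for the non-singular product, checking that the weights and the units $\epsilon$ multiply to $\big(\det(c\tau+d)\big)^{\dd/2}$ and $\epsilon^{\dd}$. Your explicit Leibniz bookkeeping simply spells out what the paper leaves implicit, so no further comment is needed.
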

\begin{proof}
Differentiating \eqref{GSTheta2Sng}, we find
\begin{gather}\label{GSTheta2SngDer}
\partial_\tau \psi_{\dd/2;2}(\tau; (A)) =  \partial^{\otimes 2}_{\tau}\big(\theta[\I_2^{(1)}] \theta[\I_2^{(2)}] \big)
\prod_{[\I_0] \in (A)} \theta[\I_0].
\end{gather}
Here $\partial^{\otimes 2}_{\tau}\big(\theta[\I_2^{(1)}] \theta[\I_2^{(2)}] \big)$ obeys Lemma~\ref{CMF2I2},
and the remaining product  obeys Theorem~\ref{CMFGSI0}.
Combining the two transformations \eqref{2I2ModTrans} and \eqref{I0PsiModTrans},
 we obtain \eqref{2I2PsiModTrans}.
\end{proof}

In the next two sections we analyze vanishing monomials in even theta constants and
obtain vector-valued modular forms in genera $3$ and~$4$.

\section{Extension of $\rho$ in genus 3}

In genus $3$, among $2^3=64$ characteristics we have $36$ even and $28$ odd.
All odd characteristics are non-singular, that is of the type $[\I_1]$. 
There exists a unique singular even characteristic of multiplicity $2$, namely $[\I_2]=[\emptyset]$. 
The remaining $35$ even characteristics are non-singular, that is of the type $[\I_0]$.
If characteristics of branch points from subsection~\ref{ss:CharHyper} are employed,
the singular even characteristic is $[\emptyset] = \big({}^{111}_{101}\big)$.
In what follows we describe characteristics by means of partitions, as explained in subsection~\ref{ss:Part}.
We denote by $\I_{All}$ the set of indices $\{i\}_{i=0}^7$ of all roots of a binary form of degree $8$.

Using G\"{o}pel systems, we construct monomials in even theta constants.
We are interested in monomials which contain $\theta[\emptyset]$, vanishing 
on the hyperelliptic locus of $\mathfrak{S}_3$. Then we compute the lowest non-vanishing derivatives
of such monomials, which are not higher than the first order in genus $3$.
With the help of the First \eqref{thomae1}, and the Third Thomae formula \eqref{thomaeM2}, 
we find images of the obtained derivatives under the map $\rho$. 

As a result, we find that derivatives of vanishing monomials of weight $4$, obtained from G\"{o}pel systems 
of the highest rank $3$, map to the discriminant of a binary form of degree $8$.
Derivatives of $\chi_{18}$ exhibit similar behaviour.

\begin{lemma}\label{CMFI2G3}
On the hyperelliptic locus of $\mathfrak{S}_3$ 
the theta constant $\theta[\emptyset]$ with the singular even characteristic vanishes 
to the first order. The matrix  
$\partial_\tau \theta[\emptyset]$  obeys the transformation law \eqref{I2ModTrans}, and
\begin{gather}\label{Chi5s2}
\partial_\tau \theta[\emptyset] 
 = - \frac{\epsilon}{4\imath \pi}\Big(\frac{\det \omega}{\pi^3}\Big)^{1/2}\Delta^{1/4}
 \omega \hat{S}[\emptyset] \omega^t,
\end{gather}
where $\epsilon^8=1$, $\omega$ is a not nomalized period matrix, $\Delta$ denotes 
the Vandermonde determinant in $\{e_i\}_{i=0}^7$, and
$$\hat{S}[\emptyset] =  \begin{pmatrix} 0&0&-1 \\ 0&2&0 \\ -1&0&0 \end{pmatrix}. $$
\end{lemma}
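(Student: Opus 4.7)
The plan is to specialize the Third Thomae formula to the unique singular even characteristic of multiplicity $2$ available in genus $3$. Since $g=3$, the Third Thomae theorem forces $|\I_2|=g-3=0$, so necessarily $\I_2=\emptyset$ and $\J_2=\I_{\text{All}}=\{0,1,\ldots,7\}$; consequently $\Delta(\I_2)\Delta(\J_2)$ reduces to $\Delta$, and the elementary symmetric polynomials on $\I_2$ collapse to $s_0(\emptyset)=1$ and $s_k(\emptyset)=0$ for $k\ne 0$. Feeding these values into the definition \eqref{Smatr} one reads off the nine entries of $\hat{S}[\emptyset]$: only contributions involving $s_0^2$ survive, so the $2s_{i-2}s_{j-2}$ term gives $S_{2,2}=2$, the $-s_{i-1}s_{j-3}$ and $-s_{i-3}s_{j-1}$ terms yield $S_{1,3}=S_{3,1}=-1$, and all other entries vanish, producing exactly the matrix displayed in the lemma.

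Next I would write the Third Thomae formula \eqref{thomaeM2} in matrix form,
\begin{gather*}
\partial_v^2\theta[\emptyset]=\epsilon\Big(\frac{\det\omega}{\pi^3}\Big)^{1/2}\Delta^{1/4}\,\omega\hat{S}[\emptyset]\omega^t,
\end{gather*}
and apply the heat equation \eqref{HEqTheta} to convert the Hessian at the origin into $\partial_\tau\theta[\emptyset]$; the resulting factor $(4\imath\pi)^{-1}$, together with the sign conventions of the Third Thomae formula, delivers \eqref{Chi5s2}. The transformation law for $\partial_\tau\theta[\emptyset]$ under $\Gamma_3$ is then immediate from Lemma~\ref{CMFI2}, since $[\emptyset]$ is a characteristic of the type $[\I_2]$.

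For the assertion of first-order vanishing on the hyperelliptic locus I would argue in two steps. Multiplicity $2$ of $[\emptyset]$ means, by the Riemann vanishing theorem recalled in subsection~\ref{ss:Part}, that $\theta[\emptyset](v;\tau)$ vanishes to order $2$ at $v=0$ for every hyperelliptic $\tau$, so the theta constant $\theta[\emptyset]$ is identically zero on the locus; meanwhile the right-hand side of \eqref{Chi5s2} is not identically zero, because $\Delta\neq 0$ for distinct branch points, $\det\omega\neq 0$, and $\hat{S}[\emptyset]$ is invertible (its determinant is $-2$), so $\omega\hat{S}[\emptyset]\omega^t$ is a nonzero symmetric matrix generically. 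The only subtlety I anticipate is the bookkeeping of the unit root $\epsilon$ with $\epsilon^8=1$ and the interplay between the sign in \eqref{thomaeM2Ord} and the heat-equation factor $1/(4\imath\pi)$; no conceptual obstacle arises.
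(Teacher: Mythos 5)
Your proposal is correct and follows essentially the same route as the paper, whose entire proof is the one-line remark that the lemma is the special case $g=3$ of Lemma~\ref{CMFI2}; you simply make explicit the specialization $\I_2=\emptyset$, the computation of $\hat{S}[\emptyset]$ from \eqref{Smatr} with $s_0(\emptyset)=1$ and $s_k(\emptyset)=0$ otherwise, and the passage from $\partial_v^2$ to $\partial_\tau$ via \eqref{HEqTheta}. Your entry-by-entry check of $\hat{S}[\emptyset]$ and the order-of-vanishing argument (theta constant identically zero on the locus by multiplicity $2$, first $\tau$-derivative nonzero since $\det\hat{S}[\emptyset]=-2$) are both sound and consistent with the paper's conventions.
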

\begin{proof}
This Lemma is a particular case of Lemma~\ref{CMFI2} when genus is $g=3$.
\end{proof}

\begin{rem}
Here and in what follows, we assume, that 
the roots $\{e_i\}_{i=0}^7$ are numbered in accordance with the location of
canonical cycles as on  fig.~\ref{cycles}. This guarantees the absence of the eighth root $\epsilon$ of the unity
in $\rho$-images, cf.\,\eqref{Chi5s2}. Otherwise, the multiple $\epsilon$ is found from the value of the left-hand side.
\end{rem}

\subsection{All even characteristics}
Let $\chi_{18}$ be defined by
\begin{equation}\label{Chi18Gen}
\chi_{18} = \prod_{\text{all even }[\I]} \theta[\I].
\end{equation}

\begin{theo}
On the hyperelliptic locus of $\mathfrak{S}_3$, $\chi_{18}$ acquires the form
\begin{equation}\label{Chi18}
\chi_{18} = \theta[\emptyset] \prod_{a=1}^{35} \theta[\I_0^{(a)}],
\end{equation}
and vanishes to the first order. Moreover, $\partial_{\tau} \chi_{18}$
is a vector-valued modular form of weight $20$, which obeys the transformation law \eqref{I2PsiModTrans},
and
\begin{gather}\label{DChi18Dtau}
\partial_{\tau} \chi_{18} = - \frac{1}{4\imath \pi}  \Big(\frac{\det \omega}{\pi^3}\Big)^{18}
 \Delta^4 \omega \hat{S}(\emptyset) \omega^t.
\end{gather}
\end{theo}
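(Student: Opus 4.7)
The plan is to reduce the statement to the machinery already built in Section~\ref{s:ThetaTransf}, supplemented by an elementary combinatorial count over partitions of the eight branch indices into two blocks of size four.

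First I would justify the factorization \eqref{Chi18}. In genus $3$ there are $36$ even characteristics, $35$ of type $[\I_0]$ together with the single multiplicity-$2$ characteristic $[\I_2]=[\emptyset]$; pulling $\theta[\emptyset]$ out of \eqref{Chi18Gen} yields \eqref{Chi18}. Because each $\theta[\I_0^{(a)}]$ is non-vanishing on the hyperelliptic locus by the First Thomae formula, while $\theta[\emptyset]$ vanishes to exactly first order by Lemma~\ref{CMFI2G3}, the product $\chi_{18}$ vanishes there to first order. This places us in the hypotheses of Theorem~\ref{CMFGSI2} with $\dd=36$, which identifies $\partial_\tau\chi_{18}$ as a vector-valued modular form of weight $20$ (the scalar weight $18$ plus the $+2$ from the symmetric tensor action $(c\tau+d)\,(\cdot)\,(c\tau+d)^t$).

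Next I would derive \eqref{DChi18Dtau}. Applying the Leibniz rule entrywise to \eqref{Chi18} and evaluating on the hyperelliptic locus kills every term in which $\theta[\emptyset]$ survives undifferentiated, so
\begin{gather*}
\partial_\tau\chi_{18} = \big(\partial_\tau\theta[\emptyset]\big)\prod_{a=1}^{35}\theta[\I_0^{(a)}].
\end{gather*}
Substituting \eqref{Chi5s2} for the first factor and the First Thomae formula \eqref{thomae1Ord} for each $\theta[\I_0^{(a)}]$ collects the result in terms of $\det\omega$, Vandermonde products, and the matrix $\omega\hat{S}[\emptyset]\omega^t$. The $\omega$-powers aggregate to $(\det\omega/\pi^3)^{1/2+35/2}=(\det\omega/\pi^3)^{18}$, and the eighth-roots of unity combine to $1$ under the normal ordering convention of Remarks~\ref{R:NormMOrd} and~\ref{R:NormThomaeF}.

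The main obstacle is matching the Vandermonde powers, which reduces to the combinatorial identity
\begin{gather*}
\prod_{a=1}^{35}\Delta(\I_0^{(a)})\,\Delta(\J_0^{(a)}) = \Delta^{15}.
\end{gather*}
I would prove this by counting, for each pair $i>\ell$ in $\{0,\ldots,7\}$, how many of the $\binom{8}{4}/2=35$ unordered partitions $\{\I_0,\J_0\}$ keep $i$ and $\ell$ together in one block: fixing $\{i,\ell\}$ in that block and choosing the remaining two slots among the six other indices gives $\binom{6}{2}=15$ partitions. Hence every factor $(e_i-e_\ell)$ appears with multiplicity $15$ in the product, yielding $\Delta^{15}$. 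Combining the resulting $\Delta^{15/4}$ with the $\Delta^{1/4}$ coming from \eqref{Chi5s2} produces the $\Delta^{4}$ that appears in \eqref{DChi18Dtau}, completing the verification.
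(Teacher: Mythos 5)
Your proposal is correct and follows essentially the same route as the paper, which simply invokes Lemma~\ref{CMFI2G3} and the First Thomae theorem and calls the rest a straightforward computation. Your only addition is to make explicit the combinatorial identity $\prod_{a=1}^{35}\Delta(\I_0^{(a)})\Delta(\J_0^{(a)})=\Delta^{15}$ via the $\binom{6}{2}=15$ count, which correctly accounts for the exponent $4=15/4+1/4$ in \eqref{DChi18Dtau} and is a worthwhile detail the paper leaves implicit.
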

\begin{proof}
The formula \eqref{DChi18Dtau} is obtained by straightforward computation, based on
Lemma~\ref{CMFI2G3} and the First Thomae theorem. Evidently,
$\partial_{\tau} \chi_{18}$ satisfies \eqref{I2PsiModTrans}, and so is a vector-valued modular form.
The $\rho$-image is proportional to the forth power of the discriminant $\Delta$ 
on roots $\{e_i\}_{i=0}^7$ of a binary form of degree $8$.
\end{proof}

\subsection{G\"{o}pel systems of $8$ characteristics}
In genus $3$, there exist  $135$ G\"{o}pel groups of rank $3$, 
each with one wholly even G\"{o}pel system. 
Thus, $135$ collections of eight even characteristics are obtained. 
Among them $105$ G\"{o}pel systems consist of non-singular even characteristics, 
we denote this type by $8[\I_0]$; and 
$30$ contain the singular even characteristic $[\emptyset]$, 
we denote the latter type by $1[\I_2]+7[\I_0]$.

\begin{theo}\label{T:GSr3StructG3}
Let $(A P)$ be a wholly even G\"{o}pel system of $8$ characteristics in genus $3$
with the singular characteristic $[\emptyset]$. 
Then $(A P)$ is formed by characteristics
\begin{gather}\label{GSr3ch2I2p6I0G3}
\begin{split}
[\emptyset],  \qquad  &[\{i_1,i_2,i_3,i_4\}],\quad
[\{i_1,i_2,j_1,j_2\}],\quad [\{i_1,i_2,j_3,j_4\}],\\
[\{i_1,i_3,j_1,j_3\}],\quad &[\{i_1,i_3,j_2,j_4\}], \quad
[\{i_1,i_4,j_1,j_4\}],\quad  [\{i_1,i_4,j_2,j_3\}].
\end{split}
\end{gather}
where $i_1$, $i_2$, $i_3$, $i_4$, $j_1$, $j_2$, $j_3$, $j_4$ are different numbers chosen from 
$8$ indices $\I_{All}$.
\end{theo}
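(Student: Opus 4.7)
The plan is to convert the Göpel-system hypothesis into a combinatorial closure property on partitions of $\I_{\text{All}} = \{0, 1, \ldots, 7\}$ and then use the rigidity of $4$-subsets under symmetric difference to determine the structure up to a relabeling of the complement of $\I^{(1)}$.

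First I would record the key identity. Writing $[\I] = [\varepsilon(\I)] + [K]$ with $[\emptyset] = [K]$ in genus $3$, and using additivity $\varepsilon(\I_1) + \varepsilon(\I_2) = \varepsilon(\I_1 \,\Delta\, \I_2)$ together with $3[K] = [K]$ in $\mathbb{F}_2$, one obtains
\begin{gather*}
 [\I_1] + [\I_2] + [\I_3] = [\I_1 \,\Delta\, \I_2 \,\Delta\, \I_3],
\end{gather*}
where $\Delta$ denotes symmetric difference. Since $(A P)$ is a coset of a rank-$3$ Göpel group, it is closed under ternary sums, so taking $\I_3 = \emptyset$ shows that the $8$ partitions (modulo complementation) attached to $(A P)$ form an $\mathbb{F}_2^3$-subspace of subsets of $\I_{\text{All}}$ under $\Delta$, with $\emptyset$ as the identity. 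Because $[\emptyset]$ is the unique singular even characteristic in genus~$3$, the remaining seven elements are non-singular even, hence each represented by a $4$-subset of $\I_{\text{All}}$.

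Choose any one of these seven, call it $\I^{(1)}$, and write $\I^{(1)} = \{i_1, i_2, i_3, i_4\}$ with complement $\J^{(1)} = \{j_1, j_2, j_3, j_4\}$. Closure forces $\I^{(a)} \,\Delta\, \I^{(1)}$ to be another $4$-subset in the system, so $|\I^{(a)} \cap \I^{(1)}| = 2$ for a suitable representative and each $\I^{(a)}$ has the shape $\{i_p, i_q, j_r, j_s\}$ for pairs $\{p, q\}, \{r, s\} \subset \{1, 2, 3, 4\}$. Now sort the six nontrivial elements by their \emph{$i$-pair class} $\{p, q\}$ modulo the complementary pair in $\{1, 2, 3, 4\}$. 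The symmetric difference of two elements in the same $i$-pair class lies in the $j$-coordinates; for it to be a $4$-subset in the system, the two $j$-pairs must be complementary in $\{1, 2, 3, 4\}$, and so they form a perfect matching of $\{j_1, \ldots, j_4\}$. In particular each $i$-pair class carries at most two elements, and since the three classes must absorb all six nontrivial elements, each carries exactly two. Taking symmetric differences across distinct $i$-pair classes, a direct calculation shows that the two $j$-pairs involved must meet in exactly one index, which rules out assigning the same $j$-matching to two distinct $i$-pair classes; hence the three $i$-pair classes are in bijection with the three perfect matchings of $\{j_1, \ldots, j_4\}$.

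Finally, permutations of $\{j_1, \ldots, j_4\}$ act on the three $j$-matchings through the surjection $S_4 \twoheadrightarrow S_3$ (with kernel the Klein four-group), so every bijection of the three matchings is realizable by relabeling the $j$'s. Relabeling $j_1, \ldots, j_4$ so that the $i$-pair $\{i_1, i_k\}$ is associated with the $j$-matching containing $\{j_1, j_k\}$ for $k = 2, 3, 4$ produces exactly the list \eqref{GSr3ch2I2p6I0G3}. The main obstacle is the bookkeeping modulo complementation---one must verify that ``$i$-pair class'' and ``$j$-matching'' descend to invariants under $\I \mapsto \I^c$ so that the counting is not double-booked---but once this is in place the argument is elementary linear algebra over $\mathbb{F}_2$ combined with the fact that two distinct pairs in $\{1, 2, 3, 4\}$ are complementary iff they belong to the same matching and otherwise meet in exactly one element.
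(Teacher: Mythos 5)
Your proof is correct, and it is genuinely more informative than the one in the paper: the paper's ``proof'' of Theorem~\ref{T:GSr3StructG3} consists of the phrase ``based on straightforward computations'' followed by an enumeration argument ($70$ choices of a $4$-subset, $210$ splittings into pairs, grouped into $30$ systems of seven), whereas you actually derive the structure. Your route is to transport the coset structure of $(AP)$ through the partition--characteristic dictionary: since $[\I_1]+[\I_2]+[\emptyset]=[\I_1\mathbin{\triangle}\I_2]$, the eight partitions form an $\mathbb{F}_2^3$-subspace under symmetric difference (modulo complementation), wholly-even-ness forces the seven nonzero elements to be $4$-subsets, and the linear algebra then pins down the $i$-pair classes, the $j$-matchings, and their bijection, with the final normalization realized through the surjection $S_4\twoheadrightarrow S_3$. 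All the individual steps check out (the intersection counts $|\I^{(a)}\cap\I^{(1)}|=2$, the ``at most two per $i$-pair class'' bound, the cross-class computation forcing the $j$-pairs to meet in one index, and the well-definedness of the invariants under complementation). Two remarks. First, your argument never invokes the syzygetic condition on $(P)$, only the coset structure and wholly-even-ness; this is legitimate (the theorem is an implication whose hypothesis includes being a G\"{o}pel system) and in fact proves a slightly stronger statement. Second, your structural derivation does not by itself reproduce the count of $30$ such systems that the paper folds into its proof, but that count is not part of the statement of the theorem, so nothing is missing.
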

\begin{proof} is based on straightforward computations.

There are exist $30$  wholly even G\"{o}pel systems of $8$ characteristics
of the form \eqref{GSr3ch2I2p6I0G3}.
Indeed, four indices $\{i_1,i_2,i_3,i_4\}$ can be chosen from $\I_{\text{All}}$ in $70$ ways.
We split each four into a pair of two, and obtain $210$ combinations $\{i_1,i_2 \mid i_3,i_4\}$. 
These $210$ combinations split into $30$ collections of seven, \and each seven
combine into a G\"{o}pel system of the form \eqref{GSr3ch2I2p6I0G3}. 
\end{proof}

\begin{theo}\label{T:Chi4G3}
Every wholly even G\"{o}pel system $(AP)$ of $8$ characteristics 
which include the singular characteristic $[\emptyset]$, 
as defined in Theorem~\ref{T:GSr3StructG3},
gives rise to a monomial in theta constants 
\begin{gather}\label{Chi4G3}
\chi_4 (AP) = \theta[\emptyset] \prod_{[\I_0] \in (AP)} \theta[\I_0]. 
\end{gather}
The monomials $\chi_4$ vanish to the first order on the hyperelliptic locus of $\mathfrak{S}_3$, 
and $\partial_\tau \chi_4$ is a vector-valued modular form of weight $6$. Moreover,  $\rho$-images
of $\partial_\tau \chi_4 (AP)$ do not depend on the choice of a G\"{o}pel system $(AP)$:
\begin{gather}\label{DChi4}
\partial_\tau \chi_4  =  - \frac{1}{4\imath \pi} 
\Big(\frac{\det \omega}{\pi^3}\Big)^{4} \Delta \, \omega \hat{S}(\emptyset) \omega^t.
\end{gather}
\end{theo}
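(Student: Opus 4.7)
The plan is as follows. The modular-form properties of $\partial_\tau\chi_4$ are an immediate corollary of Theorem~\ref{CMFGSI2} applied with $\dd=8$: the monomial $\chi_4(AP)$ is precisely $\psi_{4;1}(\tau;(AP))$ in the notation of that theorem, so (using Lemma~\ref{CMFI2G3} for the first-order vanishing of $\theta[\emptyset]$) it vanishes to the first order on the hyperelliptic locus, and $\partial_\tau\chi_4$ obeys the transformation rule \eqref{I2PsiModTrans} with scalar factor $\det(c\tau+d)^{4}$ together with the conjugation by $c\tau+d$, exhibiting it as a vector-valued modular form of weight $4+2=6$.

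To compute the $\rho$-image, I apply the Leibniz rule, noting that only $\theta[\emptyset]$ vanishes on the hyperelliptic locus, so
\[
\partial_\tau \chi_4(AP) \;=\; \big(\partial_\tau \theta[\emptyset]\big)\,\prod_{[\I_0^{(a)}] \in (AP)} \theta[\I_0^{(a)}].
\]
Substituting Lemma~\ref{CMFI2G3} into the first factor and the First Thomae formula \eqref{thomae1} into each of the seven remaining factors, the prefactors of $\det\omega$ combine to $(\det\omega/\pi^3)^{4}$, the matrix $\omega\hat{S}(\emptyset)\omega^t$ carries through unchanged, and all the dependence on the particular G\"opel system is concentrated in the Vandermonde product
\[
\Delta^{1/4}\,\prod_{a=1}^{7}\big(\Delta(\I_0^{(a)})\Delta(\J_0^{(a)})\big)^{1/4}.
\]
Matching with \eqref{DChi4} then reduces to the identity
\[
\prod_{a=1}^{7} \Delta(\I_0^{(a)})\,\Delta(\J_0^{(a)}) \;=\; \Delta^{3},
\]
which must hold for every rank-$3$ wholly even G\"opel system containing $[\emptyset]$.

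The key combinatorial step is to verify this identity. A factor $(e_p-e_q)$, $p<q$, appears in $\Delta(\I_0^{(a)})\Delta(\J_0^{(a)})$ precisely when $p$ and $q$ lie in the same block of the partition $\I_0^{(a)}\cup\J_0^{(a)}$; hence the identity is equivalent to showing that every pair $\{p,q\}\subset\I_{\mathrm{All}}$ is \emph{together} in exactly three of the seven partitions listed in Theorem~\ref{T:GSr3StructG3}. A global count yields $7\cdot 2\binom{4}{2}=84=3\binom{8}{2}$ together-incidences, so the correct average is forced; the uniformity across pairs follows by noting that the list \eqref{GSr3ch2I2p6I0G3} is stable under independent permutations of $\{i_1,\ldots,i_4\}$ and of $\{j_1,\ldots,j_4\}$, and these act transitively on each of the three orbits of pairs: $ii$-type, $jj$-type, and $ij$-type, reducing the verification to one representative per orbit.

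The main obstacle is precisely this uniform-incidence claim: while the average is forced by counting, uniformity is not a consequence of the syzygetic property alone and has to be checked against the explicit structure \eqref{GSr3ch2I2p6I0G3}. Once confirmed, the displayed identity, and hence formula \eqref{DChi4}, follow at once, and the $\rho$-image manifestly does not depend on the choice of $(AP)$.
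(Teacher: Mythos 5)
Your overall route coincides with the paper's: the transformation law is delegated to Theorem~\ref{CMFGSI2} (with $\dd=8$), and the $\rho$-image is computed by the Leibniz rule, Lemma~\ref{CMFI2G3} for the factor $\partial_\tau\theta[\emptyset]$, and the First Thomae formula for the seven non-singular factors. You go further than the paper in isolating the combinatorial identity $\prod_{a=1}^{7}\Delta(\I_0^{(a)})\Delta(\J_0^{(a)})=\Delta^{3}$ --- equivalently, that every pair of indices lies in a common block of exactly three of the seven partitions in \eqref{GSr3ch2I2p6I0G3} --- which the paper's proof leaves implicit. That is the right fact to make explicit, and it is true.

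However, your justification of the uniform-incidence claim is flawed. The list \eqref{GSr3ch2I2p6I0G3} is \emph{not} stable under independent permutations of $\{i_1,\dots,i_4\}$ and of $\{j_1,\dots,j_4\}$: for instance the transposition $i_1\leftrightarrow i_2$ with the $j$'s fixed sends the partition $\{i_1,i_3,j_1,j_3\}\cup\{i_2,i_4,j_2,j_4\}$ to $\{i_2,i_3,j_1,j_3\}\cup\{i_1,i_4,j_2,j_4\}$, which occurs nowhere in the list. The list encodes a fixed bijection between the three $2{+}2$ pairings of $\{i_1,\dots,i_4\}$ and those of $\{j_1,\dots,j_4\}$, and only permutations respecting that bijection (e.g.\ the diagonal $S_4$ acting simultaneously on the subscripts of the $i$'s and the $j$'s, and the swap $i_k\leftrightarrow j_k$) preserve it. Under these genuine symmetries the $ij$-pairs split into two orbits, matching pairs $\{i_k,j_k\}$ and non-matching pairs $\{i_k,j_l\}$ with $k\neq l$, so checking a single ``$ij$-type'' representative does not cover all cases. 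The conclusion is unaffected --- a direct check shows each of the $28$ pairs is together exactly three times (e.g.\ $\{i_1,i_2\}$ in partitions $1,2,3$; $\{i_1,j_1\}$ in $2,4,6$; $\{i_1,j_2\}$ in $2,5,7$) --- but as written your reduction to one representative per orbit rests on a false stability claim and must be replaced either by the corrected orbit analysis or by the exhaustive verification.
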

\begin{proof}
Evidently, the lowest non-vanishing derivative of $\chi_4(AP)$ is
\begin{gather*}
\partial_\tau \chi_4 (AP) = (\partial_\tau \theta[\emptyset]) \prod_{[\I_0] \in (AP)} \theta[\I_0].
\end{gather*}
Applying Lemma~\ref{CMFI2G3} and the First  Thomae theorem, we find \eqref{DChi4}.
So, $\partial_\tau \chi_4$ obeys by the transformation law \eqref{I2PsiModTrans}, and 
the $\rho$-image is proportional to the discriminant $\Delta$ 
on roots $\{e_i\}_{i=0}^7$ of a binary form of degree $8$.
Since the matrix $\hat{S}(\emptyset)$ is constant,  $\rho$-images of all $\partial_\tau \chi_4 (AP)$
coincide.
\end{proof}

\subsection{G\"{o}pel systems of $4$ characteristics}
In genus $3$, there are 
$315$ G\"{o}pel groups of rank $2$, each possesses three wholly even G\"{o}pel systems.
Among $315$ collections of three even G\"{o}pel systems, $210$ have
the type $3(4[\I_0])$, that is all three G\"{o}pel systems contain only non-singular even characteristics;
and the remaining $105$ have the type  $(1[\I_2]+3[\I_0])+2(4[\I_0])$, that is each three
contain one G\"{o}pel system of the type $[\I_2]+3[\I_0]$, with the singular even characteristic, 
and two G\"{o}pel systems of the type $4[\I_0]$, with non-singular even characteristics. 
The latter are of our particular interest.

\begin{theo}\label{T:GSr2StructG3}
Let $(A_k P)$, $k=1$, $2$, $3$, be three wholly even G\"{o}pel systems 
derived from the same G\"{o}pel group $(P)$ of rank $2$ in genus $3$,
such that one G\"{o}pel system, say $(A_1 P)$, contains the singular characteristic $[\emptyset]$, 
and the other two are composed of non-singular even characteristics. 
Then $(A_1 P)$ is formed by characteristics
\begin{gather}\label{GSr2ch2I2p2I0G3}
[\emptyset],  \qquad  [\{i_1,j_1,i_2,j_2\}],\quad
[\{i_1,j_1,i_3,j_3\}],\quad [\{i_1,j_1,i_4,j_4\}],
\end{gather}
where $\{i_1,j_1\} \cup\{i_2,j_2\}\cup\{i_3,j_3\}\cup \{i_4,j_4\}$ is a partition of 
$8$ indices $\I_{All}$, and 
the remaining two G\"{o}pel systems consist of characteristics
\begin{gather}\label{GSr2ch4I0G3}
\begin{split}
&(A_2 P):\ \  [\{i_1,i_2,i_3,i_4\}],\ [\{i_1,i_2,j_3,j_4\}],\ 
[\{i_1,j_2,i_3,j_4\}],\ [\{i_1,j_2,j_3,i_4\}], \\
&(A_3 P):\ \  [\{i_1,i_2,i_3,j_4\}],\  [\{i_1,i_2,j_3,i_4\}],\ 
[\{i_1,j_2,i_3,i_4\}],\  [\{j_1,i_2,i_3,i_4\}].
\end{split}
\end{gather}
\end{theo}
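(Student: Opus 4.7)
I would first exploit the hypothesis $[\emptyset]\in(A_1P)$ by setting $A_1=[\emptyset]$, so $(A_1P)=\{[\emptyset],[X_1],[X_2],[X_3]\}$. Each $[X_a]$ is even (because $(A_1P)$ is wholly even) and not equal to the unique singular even characteristic $[\emptyset]$, so $[X_a]$ is non-singular and admits a partition representation $[X_a]=[\K_a]$ with $|\K_a|=4$. The rank-$2$ group $(P)=[\emptyset]+(A_1P)$ contains $[0]$ and the three non-zero translates $[\emptyset]+[X_a]$, whose sum in the mod-$2$ characteristic group vanishes; this is equivalent to the key identity
\[
[X_1]+[X_2]+[X_3]=[\emptyset].
\]

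\textbf{The common-pair normal form.} Next I would expand $[\K_a]=\sum_{l\in\K_a}[\varepsilon_l]+[K]$ and use $[\emptyset]=[K]$ to rewrite the key identity as $\sum_l c_l\,[\varepsilon_l]=0$, where $c_l=|\{a:l\in\K_a\}|$. Since the only linear relations among $[\varepsilon_0],\ldots,[\varepsilon_7]$ are $[\varepsilon_0]=0$ and $\sum_{i=0}^{7}[\varepsilon_i]=0$, the odd-count set $\{l:c_l\text{ odd}\}$ must lie in $\{\emptyset,\{0\},\{1,\ldots,7\},\I_{All}\}$. The parity constraint $\sum_a|\K_a|=12$ with each $|\K_a|=4$ rules out the two odd-size options, leaving $\emptyset$ or $\I_{All}$. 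Replacing any $\K_a$ by its complement $\bar\K_a$ (invisible at the level of characteristics) flips the parity of every $c_l$ and so swaps these two cases, hence one may assume every $c_l$ is odd: six indices then satisfy $c_l=1$ and exactly two satisfy $c_l=3$. The latter two form the common pair $\{i_1,j_1\}$, the former six split into pairs $\{i_{a+1},j_{a+1}\}$ that distinguish the three $\K_a$, and $\K_a=\{i_1,j_1,i_{a+1},j_{a+1}\}$, giving \eqref{GSr2ch2I2p2I0G3}.

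\textbf{Identifying the two remaining wholly even systems.} Because $[\emptyset]$ is syzygetic with every element of $(A_1P)$, translation by $[\emptyset]$ preserves parity, so $(P)$ is also wholly even. Its non-zero elements satisfy $[\emptyset]+[\K_a]=\sum_{l\in\K_a}[\varepsilon_l]$, which as a partition characteristic equals $[\Y_a]$ with $\Y_a=\K_a\triangle\{2,4,6\}$, canonicalized by $\triangle\{0\}$ (or by complementation) to attain $|\Y_a|=4$. Substituting $\K_a=\{i_1,j_1,i_{a+1},j_{a+1}\}$ produces $\Y_a$ of the shape $\{i_1,x_2,x_3,x_4\}$ with $x_k\in\{i_k,j_k\}$ and an even number of $j$'s among $x_2,x_3,x_4$, which is precisely $(A_2P)$ in \eqref{GSr2ch4I0G3} with $A_2=[0]$. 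The third wholly even coset $(A_3P)$ is the unique remaining isotropic non-zero class in the hyperbolic pair $P^\perp/P$; a convenient representative is $A_3=[\{i_1,i_2,i_3,j_4\}]$, and the same partition arithmetic on $A_3+(P)$ yields \eqref{GSr2ch4I0G3} for $(A_3P)$, characterized by exactly one $j$ among the four indices in each element.

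\textbf{Main obstacle.} The real work is in the normal-form step: recognizing that the identity $\sum_l c_l\,[\varepsilon_l]=0$, together with the two linear relations on $[\varepsilon_l]$ and the integer constraint $\sum_a|\K_a|=12$, admits only the two mutually complementary solutions, and that the freedom $\K_a\leftrightarrow\bar\K_a$ always allows the shared-pair representative to be chosen. The remaining paragraphs reduce to direct mod-$2$ arithmetic, guided by the addition rule $[\K]+[\K']=[(\K\triangle\K')\triangle\{2,4,6\}]$ (canonicalized via the linear relations) that governs partition characteristics in genus $3$.
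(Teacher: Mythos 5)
Your first two steps are sound and give a genuine structural derivation of \eqref{GSr2ch2I2p2I0G3}: the identity $[X_1]+[X_2]+[X_3]=[\emptyset]$, the observation that the only relations among $[\varepsilon_0],\dots,[\varepsilon_7]$ are $[\varepsilon_0]=0$ and $\sum_{l=0}^{7}[\varepsilon_l]=0$, and the complement normalization really do force the common-pair form. (The paper offers nothing here beyond ``straightforward computations'' and the count of $105$, so this part is a genuine contribution.)

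The third step, however, rests on a false premise. You assert that $[\emptyset]$ is syzygetic with every element of $(A_1P)$ and conclude that $(P)=[\emptyset]+(A_1P)$ is wholly even, hence is one of the two remaining wholly even cosets. But pairwise syzygeticity holds only inside the G\"{o}pel \emph{group}; in a G\"{o}pel \emph{system} only every \emph{three} characteristics are syzygetic (Baker, Art.~297, exactly as quoted in the paper), and $|A+Q_1,A+Q_2|=|A,Q_1|+|A,Q_2|$ need not vanish. Concretely, take the common pair $\{i_1,j_1\}=\{0,2\}$ and $(A_1P)=\{[\emptyset],[\{0,1,2,3\}],[\{0,2,4,5\}],[\{0,2,6,7\}]\}$. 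With the branch-point characteristics of subsection~2.5 one finds $P_1=[\emptyset]+[\{0,1,2,3\}]=\big({}^{010}_{000}\big)$ and $P_2=[\emptyset]+[\{0,2,4,5\}]=\big({}^{111}_{100}\big)$, with $|P_1,P_2|=0$, so $(P)$ is a legitimate rank-$2$ G\"{o}pel group and $(A_1P)$ is wholly even; yet $|P_2|=1$, so $(P)$ is half odd--half even and is \emph{not} one of the three wholly even systems (indeed its element $0=[\{0,2,4,6\}]$ contains both members of the pair $\{0,2\}$ and is not of the form \eqref{GSr2ch4I0G3}). The two remaining wholly even cosets must instead be located as $A+(P)$ with $A$ even and $|A,Q|=|Q|$ for every $Q\in(P)$ (in this example $A_2=[\{0,1,5,6\}]$ and $A_3=[\{0,1,5,7\}]$ serve, and their cosets are precisely the two transversal families of \eqref{GSr2ch4I0G3}); one then still has to verify, via the addition rule for partition characteristics, that their elements are exactly the transversals $\{x_1,x_2,x_3,x_4\}$, $x_k\in\{i_k,j_k\}$, separated by the parity of the number of $j$'s. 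Your closing computation $\K_a\triangle\{2,4,6\}$ cannot be salvaged as written either: applied to the example above it returns $[\{0,2,5,7\}]$, which is not a transversal of the pairs.
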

\begin{proof} is based on straightforward computations.

Three wholly even G\"{o}pel systems of $4$ characteristics
such that $(A_1 P)$ has the form \eqref{GSr2ch2I2p2I0G3} and
$(A_2 P)$, $(A_3 P)$ have the form \eqref{GSr2ch4I0G3} can be chosen
in $8!/(2!2!2!2!4!)\,{=}\,105$ ways from 8 indices $\I_{\text{All}}$. This is the number of 
collections of three G\"{o}pel systems  of the type $(1[\I_2]+3[\I_0])+2(4[\I_0])$ in genus $3$.
 \end{proof}
 
 Let $(AP)$ and $(BP)$ be two wholly even G\"{o}pel systems 
derived from the same G\"{o}pel group $(P)$ of rank $2$, such that 
$(AP)$ contains the singular characteristic $[\emptyset]$. 
As stated in Theorem~\ref{T:GSr2StructG3}, such a pair of G\"{o}pel systems 
is represented by $(A_1P)$, $(A_2 P)$ or $(A_1P)$, $(A_3 P)$.
Then we introduce a function
\begin{gather}\label{h0G3}
h_0(A,B,(P)) = \frac{ \theta[\emptyset]  \prod_{[\I_0] \in (A P)} \theta[\I_0]}
{\prod_{[\I_0] \in (B P)} \theta[\I_0]}.
\end{gather}

\begin{theo}\label{T:GSr2SplitG3}
Every three wholly even G\"{o}pel systems $(A_k P)$, $k=1$, $2$, $3$, 
of $4$ characteristics, as defined in Theorem~\ref{T:GSr2StructG3},
give rise to three monomials in theta constants:
\begin{subequations}\label{Psi2Phi2G3}
\begin{align}
&\psi_2 (A_1P) = \theta[\emptyset] \prod_{[\I_0] \in (A_1P)} \theta[\I_0],\\
&\phi_2(A_k P) = \prod_{[\I_0] \in (A_k P)} \theta[\I_0],\quad k=2,\ 3.
\end{align}
\end{subequations}
Then 
\begin{enumerate}
\renewcommand{\labelenumi}{\arabic{enumi})}

\item $\psi_2(A_1P) \phi_2(A_k P)$, $k=2$, $3$, 
vanish to the first order on the hyperelliptic locus of $\mathfrak{S}_3$, and
$\rho\big(\partial_\tau \big(\psi_2 (A_1P) \phi_2 (A_k P) \big) \big)$
are vector-valued modular forms of weight $6$; moreover
\begin{gather*}\label{DPsi2Phi2G3}
\rho\big(\partial_\tau \big(\psi_2 (A_1P) \phi_2 (A_2 P) \big) \big) = 
\rho\big(\partial_\tau \big(\psi_2 (A_1P) \phi_2 (A_3 P) \big) \big)
= \rho\big(\partial_\tau \chi_4 \big),
\end{gather*}
where $\rho\big(\partial_\tau \chi_4 \big)$ is defined by \eqref{DChi4}.

\item   $\psi_2(A_1P)$ vanishes to the first order 
on the hyperelliptic locus of $\mathfrak{S}_3$, 
and $\rho$-images of $\phi_2(A_2 P)$ and $\phi_2(A_3 P)$ are equal:
$$\rho\big(\phi_2(A_2 P)\big) = \rho \big( \phi_2(A_3 P)\big);$$

\item $h_0(A_1,A_k,(P)) = \psi_2(A_1 P)/ \phi_2(A_k P)$, $k=2$, $3$,  
vanish on the hyperelliptic locus of $\mathfrak{S}_3$
to the first order, and 
\begin{multline}\label{0Psi2Phi2G3}
\rho \big( \partial_\tau h_0(A_1,A_2,(P)) \big)  = 
\rho \big( \partial_\tau h_0(A_1,A_3,(P)) \big) \\
= - \frac{1}{4\imath \pi} [i_1 j_1] [i_2 j_2] [i_3j_3] [i_4j_4]\, \omega \hat{S}(\emptyset) \omega^t,
\end{multline}
where all factors $[ij]=e_i-e_j$ are normally ordered.
\end{enumerate}
\end{theo}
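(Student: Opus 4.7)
The plan is to reduce parts (1) and (2) to the preceding theorems, and to derive part (3) by a direct Thomae-type computation supplemented by one combinatorial identity for the Vandermonde determinant.

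For part (1), I would first observe that for $k=2,3$ the union $(A_1P)\cup(A_kP)$ is a wholly even G\"opel system of rank~$3$ containing the singular characteristic $[\emptyset]$, so it has the explicit form described by Theorem~\ref{T:GSr3StructG3}. This can be checked directly from \eqref{GSr2ch2I2p2I0G3}--\eqref{GSr2ch4I0G3}: all seven non-$[\emptyset]$ characteristics in the union share the index $i_1$, and the triples of remaining indices match those of \eqref{GSr3ch2I2p6I0G3} after a relabeling induced by the matching $\{i_1,j_1\}\cup\cdots\cup\{i_4,j_4\}$ that underlies the G\"opel group $(P)$. Consequently $\psi_2(A_1P)\phi_2(A_kP)=\chi_4(A_1P\cup A_kP)$, and Theorem~\ref{T:Chi4G3} immediately yields the vanishing order, the transformation law, and the common value of $\rho(\partial_\tau\chi_4)$ claimed in part~(1).

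For part (2), the vanishing of $\psi_2(A_1P)$ is immediate since $\theta[\emptyset]$ is one of its factors. To obtain $\rho(\phi_2(A_2P))=\rho(\phi_2(A_3P))$, I apply the Leibniz rule to $\partial_\tau\big(\psi_2(A_1P)\phi_2(A_kP)\big)$ on the hyperelliptic locus; since $\psi_2(A_1P)$ vanishes there, only the first term survives, and part~(1) gives
\[
\rho\big(\partial_\tau\psi_2(A_1P)\big)\cdot\rho\big(\phi_2(A_kP)\big)=\rho(\partial_\tau\chi_4),\qquad k=2,3.
\]
By Lemma~\ref{CMFI2G3}, the matrix $\rho(\partial_\tau\psi_2(A_1P))$ equals $\rho(\partial_\tau\theta[\emptyset])$ times a nonzero product of non-singular theta constants, so it is a nonzero multiple of $\omega\hat{S}(\emptyset)\omega^t$. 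Two scalar multiples of a nonzero matrix coincide only when the scalars do, and the equality follows.

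For part (3), write $h_0=\chi_4/\phi_2^{\,2}$; on the hyperelliptic locus $\chi_4$ vanishes, so $\partial_\tau h_0=(\partial_\tau\chi_4)/\phi_2^{\,2}$ there, and hence $\rho(\partial_\tau h_0)=\rho(\partial_\tau\chi_4)/\rho(\phi_2(A_kP))^2$. Substituting \eqref{DChi4} in the numerator and the First Thomae formula in the denominator, the claim reduces to the combinatorial identity
\[
\prod_{[\I_0]\in(A_kP)}\Delta(\I_0)\Delta(\J_0)=\frac{\Delta^{2}}{\prod_{m=1}^{4}(e_{i_m}-e_{j_m})^{2}},
\]
which, once passed to a square root with signs fixed by the normal ordering of Remark~\ref{R:NormMOrd}, produces exactly the prefactor $[i_1j_1][i_2j_2][i_3j_3][i_4j_4]$ in \eqref{0Psi2Phi2G3}. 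This identity is purely combinatorial: the four bipartitions listed in \eqref{GSr2ch4I0G3} produce $8$ four-element subsets contributing $8\binom{4}{2}=48$ unordered pairs counted with multiplicity, and a case check shows that each of the four ``diagonal'' pairs $(i_m,j_m)$ associated with $(P)$ occurs zero times while every other pair occurs exactly twice. This combinatorial count, together with the sign bookkeeping from the normal ordering, is the main technical point; everything else reduces to Leibniz and the Thomae formulas recalled in subsection~\ref{ThomaeF}.
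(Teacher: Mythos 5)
Your proposal is correct, and it in fact supplies an argument that the paper itself omits: Theorem~\ref{T:GSr2SplitG3} is stated there without proof, the intended justification being the same Thomae-formula toolkit (Lemma~\ref{CMFI2G3}, the First Thomae theorem, Theorem~\ref{CMFGSI2}) that you invoke. Your two additions are sound and worth keeping explicit. First, the observation that $(A_1P)\cup(A_kP)$, $k=2,3$, is itself a wholly even rank-$3$ G\"opel system of the shape \eqref{GSr3ch2I2p6I0G3} checks out directly against the lists \eqref{GSr2ch2I2p2I0G3}--\eqref{GSr2ch4I0G3} (for $k=2$ the union is literally the set \eqref{GSr3ch2I2p6I0G3}; for $k=3$ one must first replace $[\{j_1,i_2,i_3,i_4\}]$ by its complementary representative $[\{i_1,j_2,j_3,j_4\}]$ before the relabeling works), so part (1) does reduce to Theorem~\ref{T:Chi4G3}. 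Second, the multiplicity count behind the identity $\prod_{[\I_0]\in(A_kP)}\Delta(\I_0)\Delta(\J_0)=\Delta^2\prod_{m}(e_{i_m}-e_{j_m})^{-2}$ is right: a pair $(a,b)$ contributes to $\Delta(\I_0)\Delta(\J_0)$ exactly when $a,b$ lie on the same side of the partition, and for each of the four partitions in \eqref{GSr2ch4I0G3} the diagonal pairs $(i_m,j_m)$ are always split while every other pair is unsplit for exactly two of the four, giving multiplicities $0$ and $2$ as you claim. The only points to make explicit in a final write-up are that $\phi_2(A_kP)\neq 0$ on the hyperelliptic locus (all its characteristics are non-singular even, so Thomae gives a nonzero product of differences of distinct branch points), which legitimizes the division in part (3), and the sign normalization via $\Ord$ from Remark~\ref{R:NormMOrd} when passing from the squared identity to $[i_1j_1][i_2j_2][i_3j_3][i_4j_4]$.
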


Note, that $h_0(A,B,(P))$ defined by \eqref{h0G3} 
is the same function as $h_0(A,B,(P)) = \psi_2(A P)/ \phi_2(B P)$ with
$\psi_2$, $\phi_2$ defined by \eqref{Psi2Phi2G3}

\begin{cor}\label{T:GSr2Invar}
In genus $3$,
the sum of $h_0 (A, B, (P))$ over all $105$
pairs of G\"{o}pel systems $(A P)$, $(B P)$ such that
$(AP)$ is of the type $1[\I_2]+3[\I_0]$, 
and $(B P)$ of the type $4[\I_0]$  is a modular function of weight $0$, which 
vanishes to the first order on the hyperelliptic locus of $\mathfrak{S}_3$,
and
\begin{gather*}
\sum_{105 (P)} \partial_\tau h_0 (A, B, (P)) = 
- \frac{1}{4\imath \pi} I_1(e)\, \omega \hat{S}(\emptyset) \omega^t,
\end{gather*}
where $I_1$ is a quasi-invariant of weight $1$ and degree $4$ of a binary $8$-form:
\begin{gather}\label{I1DefG3}
I_1(e) = \sum_{105}  [i_1 j_1] [i_2 j_2] [i_3j_3] [i_4j_4],
\end{gather}
all factors $[ij]=e_i-e_j$ are normally ordered.
\end{cor}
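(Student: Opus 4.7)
The plan is to sum the explicit formula from item~(3) of Theorem~\ref{T:GSr2SplitG3} over the 105 rank-2 Göpel groups of the type $(1[\I_2]+3[\I_0])+2(4[\I_0])$, then identify the resulting scalar sum as $I_1(e)$.

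First, I would verify the weight and vanishing properties of each summand. The function $h_0(A,B,(P))$ is a quotient of two theta monomials of degree~4 (hence each of weight~$2$ by Theorem~\ref{CMFGSI0}), so it has weight~$0$. Its numerator carries the factor $\theta[\emptyset]$, which vanishes to the first order on the hyperelliptic locus by Lemma~\ref{CMFI2G3}, while the denominator is a non-vanishing product of non-singular theta constants. Consequently every $h_0(A,B,(P))$ vanishes to the first order, and so does the total sum.

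Second, I would compute $\partial_\tau h_0$ restricted to the hyperelliptic locus. By the quotient rule,
\begin{equation*}
\partial_\tau h_0 = \frac{(\partial_\tau \theta[\emptyset])\prod_{[\I_0] \in (AP)}\theta[\I_0]}{\prod_{[\I_0]\in (BP)}\theta[\I_0]}
\;+\;\bigl(\text{terms still carrying }\theta[\emptyset]\bigr),
\end{equation*}
and the trailing terms vanish on the hyperelliptic locus. Applying Lemma~\ref{CMFI2G3} to $\partial_\tau\theta[\emptyset]$ and the First Thomae formula to the remaining factors, the $(\det\omega/\pi^3)^{1/2+3/2}$ in the numerator and $(\det\omega/\pi^3)^{2}$ in the denominator cancel exactly, yielding precisely the expression of Theorem~\ref{T:GSr2SplitG3}, item~(3).

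Third, I would perform the summation. By Theorem~\ref{T:GSr2SplitG3}(3), for each rank-2 Göpel group $(P)$ of the relevant type the value of $\rho(\partial_\tau h_0)$ depends only on the partition $\{i_1,j_1\}\cup\{i_2,j_2\}\cup\{i_3,j_3\}\cup\{i_4,j_4\}$ attached to $(P)$ by Theorem~\ref{T:GSr2StructG3}, and not on the choice between $(A_2P)$ and $(A_3P)$ for $(BP)$. By Theorem~\ref{T:GSr2StructG3} these 105 Göpel groups are in bijection with the 105 partitions of $\I_{All}$ into four unordered pairs. Factoring out the common matrix $-\tfrac{1}{4\imath\pi}\,\omega\hat S(\emptyset)\omega^t$, the remaining scalar sum is precisely $\sum_{105}[i_1j_1][i_2j_2][i_3j_3][i_4j_4]$, which is $I_1(e)$ by definition~\eqref{I1DefG3}.

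The expected main obstacle is justifying that the total sum is a genuine modular function of weight~$0$, rather than a covariant object carrying a residual phase. Individual $h_0$'s acquire eighth-root-of-unity multipliers under $\Gamma_3$, and only summation over the full orbit can neutralise them. The cleanest way to handle this is to observe that on the hyperelliptic locus the action of $\Gamma_3$ factors through the symmetric group $S_{2g+2}=S_8$ on the branch points; this action fixes the unique singular even characteristic $[\emptyset]$ (whose partition representative is $\emptyset\cup\{0,\dots,7\}$), preserves the Göpel-group structure, and permutes the 105 partitions into four pairs transitively. Hence the sum is manifestly $S_8$-invariant on the hyperelliptic locus, i.e., a weight-$0$ modular function there. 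The fact that $I_1$ is only a \emph{quasi}-invariant — symmetric under unimodular change of variable in the binary form but not under arbitrary root relabelling — reflects precisely that the bijection between 105 Göpel groups and 105 pair-partitions is intrinsic to the hyperelliptic characteristic structure and not to the naked set of roots.
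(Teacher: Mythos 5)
Your proposal is correct and follows what the paper evidently intends: the corollary is stated without proof as an immediate consequence of summing item~(3) of Theorem~\ref{T:GSr2SplitG3} over the $105$ rank-$2$ G\"{o}pel groups of type $(1[\I_2]+3[\I_0])+2(4[\I_0])$, which are in bijection with the $105$ pair-partitions of $\I_{All}$, exactly as you argue. Your additional remarks on the cancellation of the $(\det\omega/\pi^3)$ powers and on why the choice between $(A_2P)$ and $(A_3P)$ is immaterial (Theorem~\ref{T:GSr2SplitG3}(2)) supply details the paper leaves implicit, and your closing caveat correctly identifies the one delicate point the paper also glosses over, namely that $I_1$ is only a quasi-invariant under root permutations.
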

We call $\I_1$ a quasi-invariant since it is invariant under unimodular transformations 
of a binary $8$-form, but no symmetric with respect to permutations of the roots $\{e_i\}_{i=0}^7$.

\section{Extension of $\rho$  in genus 4}
In genus $4$, among $2^8=256$ characteristics we have $10$ singular even characteristics
of multiplicity $2$, namely:  $[\I_2]=[\{k\}]$, $k=0$, \ldots $9$, 
then  $120$ non-singular odd characteristics, that is of the type $[\I_1]$, and 
 $126$ non-singular even characteristics, that is of the type $[\I_0]$.
If characteristics of branch points are chosen as in subsection~\ref{ss:CharHyper}, then 
\begin{gather*}
[\{0\}]=\big({}^{1111}_{0101}\big),\ \  [\{1\}]=\big({}^{0111}_{0101}\big),\ \
[\{2\}]=\big({}^{0111}_{1101}\big), \ \  [\{3\}]=\big({}^{1011}_{1101}\big),\ \
[\{4\}]=\big({}^{1011}_{1001}\big),\\
[\{5\}]=\big({}^{1101}_{1001}\big),\ \ [\{6\}]=\big({}^{1101}_{1011}\big),\ \
[\{7\}]=\big({}^{1110}_{1011}\big),\ \  [\{8\}]=\big({}^{1110}_{1010}\big),\ \ 
[\{9\}]=\big({}^{1111}_{1010}\big).
\end{gather*}
We denote by $\I_{All}$ the set of indices $\{i\}_{i=0}^9$ of all roots of a binary $10$-form.

Again we use G\"{o}pel systems to construct monomials in even theta constants, vanishing 
on the hyperelliptic locus of $\mathfrak{S}_4$. 
We find that vanishing monomials of weight $8$ contain two singular even characteristics,
and so vanish to the second order.  $\rho$-Images of
their second derivatives depend only on two roots which define the two singular even characteristics.

Similary to $\chi_{18}$ in genus $3$, we construct a modular form
 $\chi_{68}$, which vanishes to the $10$-th order, and find the $\rho$-image 
 of the tensor of its $10$-th derivatives.

\begin{lemma}\label{CMFI2G4}
On the hyperelliptic locus of $\mathfrak{S}_4$ 
theta constants $\theta[\{k\}]$, $k=0$, $1$, \ldots, $9$,  
with singular even characteristics vanish 
to the first order with respect to $\tau$. The matrices  
$\partial_\tau \theta[\{k\}]$ obey the transformation law \eqref{I2ModTrans}, and
\begin{gather}\label{Chi5s2G4}
\partial_\tau \theta[\{k\}]
 = - \frac{\epsilon}{4\imath \pi}  \Big(\frac{\det \omega}{\pi^4}\Big)^{1/2} \Delta(\I_{\text{All}}^{(k)})^{1/4}  
 \omega \hat{S}([\{k\}]) \omega^t,
\end{gather}
where $\epsilon^8=1$, $\omega$ is a not nomalized period matrix, $\Delta(\I_{\text{All}}^{(k)})$ denotes 
the Vandermonde determinant in $\I_{\text{All}} \backslash \{k\}$, and
$$\hat{S}[\{k\}] =  \begin{pmatrix} 0 & 0 & -1 & e_k \\ 0 & 2 & -e_k & -e_k^2 \\ 
 -1 & -e_k & 2e_k^2 & 0 \\ e_k & - e_k^2 & 0 & 0 \end{pmatrix}. $$
\end{lemma}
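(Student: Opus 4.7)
The plan is to obtain this lemma as the $g = 4$ specialization of Lemma~\ref{CMFI2} applied to each of the singular even characteristics of multiplicity $2$, which in genus $4$ are exactly the ten characteristics $[\{k\}]$ with $k \in \{0, 1, \ldots, 9\}$.

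First, the transformation law \eqref{I2ModTrans} for $\partial_\tau \theta[\{k\}]$ and the first-order vanishing on the hyperelliptic locus are immediate from Lemma~\ref{CMFI2} with $\I_2 = \{k\}$. To establish the explicit formula \eqref{Chi5s2G4}, I apply the Third Thomae formula \eqref{thomaeM2} with the complementary partition $\J_2 = \I_{\text{All}} \setminus \{k\}$; then $\Delta(\I_2) = 1$ is an empty product and $\Delta(\J_2) = \Delta(\I_{\text{All}}^{(k)})$, which produces the Vandermonde factor displayed in \eqref{Chi5s2G4}. Converting from $\partial_v^2 \theta$ to $\partial_\tau \theta$ via the heat equation \eqref{HEqTheta} accounts for the prefactor $-1/(4 \imath \pi)$.

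The only substantive task is the computation of the matrix $\hat{S}[\{k\}]$. For $\I_2 = \{k\}$ the elementary symmetric polynomials collapse to $s_0(\{k\}) = 1$, $s_1(\{k\}) = e_k$, and $s_n(\{k\}) = 0$ for $n \geq 2$ or $n < 0$. Substituting these into the defining formula \eqref{Smatr} for $i, j \in \{1, 2, 3, 4\}$ and keeping only surviving summands reproduces the displayed matrix entry by entry; this is a bounded check of sixteen entries with no real obstacle.

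The only delicate point is tracking the scalar $\epsilon$ in the Third Thomae formula. By Remark~\ref{R:NormThomaeF}, once the roots are labelled in accordance with the cycle configuration of Fig.~\ref{cycles} and all factors of the Vandermonde determinants are placed in normal ordering, $\epsilon$ becomes the uniform constant appearing in \eqref{thomaeM2Ord}; this is precisely the convention in force, and so the factor $\epsilon$ drops out uniformly across all ten characteristics $[\{k\}]$.
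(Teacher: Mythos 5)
Your proposal is correct and follows the same route as the paper, whose entire proof reads ``This Lemma is a particular case of Lemma~\ref{CMFI2}, when genus is $g=4$''; you simply make explicit the specialization $\I_2=\{k\}$, $\J_2=\I_{\text{All}}\setminus\{k\}$, the collapse $s_0(\{k\})=1$, $s_1(\{k\})=e_k$, $s_n(\{k\})=0$ otherwise, and the resulting entries of $\hat{S}[\{k\}]$ via \eqref{Smatr}, all of which check out. No discrepancy with the paper's argument.
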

\begin{proof}
This Lemma is a particular case of Lemma~\ref{CMFI2}, when genus is $g=4$.
\end{proof}

\subsection{All even characteristics}
Similarly to $\chi_{18}$ in genus $3$, we define
\begin{equation}\label{Chi68Gen}
\chi_{68} = \prod_{\text{all even }[\I]} \theta[\I].
\end{equation}

\begin{theo}
On the hyperelliptic locus of $\mathfrak{S}_4$, $\chi_{68}$ acquires the form
\begin{equation}\label{Chi18}
\chi_{68} = \Big(\prod_{a=1}^{10} \theta[\I_2^{(a)}]\Big)
\Big( \prod_{a=1}^{126} \theta[\I_0^{(a)}] \Big),
\end{equation}
and vanishes to the $10$-th order. Moreover,
 $\partial^{\otimes 10}_{\tau} \chi_{68}$
 is a vector-valued modular form of weight $88$,  and
\begin{gather}\label{DChi68Dtau}
\partial^{\otimes 10}_\tau \chi_{68} = 
\frac{1}{(4\imath \pi)^{10}} \Big(\frac{\det \omega}{\pi^4}\Big)^{68}  
 \Delta^{16} \omega^{\otimes 10} \Big(\SymOtimes \prod_{k=0}^{9} 
\hat{S}(\{k\})  \Big)(\omega^t)^{\otimes 10}.
\end{gather}
\end{theo}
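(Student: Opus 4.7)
The plan is to apply the Leibniz rule to $\chi_{68}$ viewed as a product of $136$ theta constants with even characteristics, isolate the non-vanishing leading contribution on the hyperelliptic locus, and substitute Lemma~\ref{CMFI2G4} together with the First Thomae theorem to obtain \eqref{DChi68Dtau}; the transformation law is then obtained by iterating Lemma~\ref{CMFI2} and combining with Theorem~\ref{CMFGSI0}.

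First I would verify the factorization by counting even characteristics in genus $4$. There are $2^{g-1}(2^g+1)=136$ even characteristics; those of multiplicity $2$ correspond to partitions with $|\I_2|=g-3=1$, giving $\binom{10}{1}=10$ singular ones (the $[\{k\}]$), while the remaining $\binom{10}{5}/2=126$ are non-singular of type $[\I_0]$. This justifies the factorization of $\chi_{68}$ and gives weight $136\cdot\tfrac12=68$. By Lemma~\ref{CMFI2G4}, each $\theta[\{k\}]$ vanishes to first order on the hyperelliptic locus, so $\chi_{68}$ vanishes to order at least $10$. Expanding $\partial^{\otimes 10}_\tau \chi_{68}$ by the generalized Leibniz rule and restricting to the locus, any term in which some $\theta[\{k\}]$ receives no derivative vanishes; since the ten $\partial_\tau$'s must cover all ten singular factors, each singular factor is hit exactly once while the $126$ non-singular theta constants remain undifferentiated. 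This produces the symmetric tensor structure
\begin{equation*}
\partial^{\otimes 10}_\tau \chi_{68}\big|_{\mathrm{hyp}} = \Big(\SymOtimes \prod_{k=0}^{9} \partial_\tau \theta[\{k\}]\Big) \prod_{a=1}^{126} \theta[\I_0^{(a)}],
\end{equation*}
showing simultaneously that the order of vanishing is exactly $10$.

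Then I would substitute Lemma~\ref{CMFI2G4} for each $\partial_\tau \theta[\{k\}]$ and \eqref{thomae1Ord} for each $\theta[\I_0^{(a)}]$, using the normal ordering of Remark~\ref{R:NormMOrd} so that all eighth roots collapse to unity. The powers of $\det\omega/\pi^4$ combine as $10\cdot\tfrac12 + 126\cdot\tfrac12 = 68$, and the ten $-\tfrac{1}{4\imath\pi}$ prefactors contribute $\tfrac{1}{(4\imath\pi)^{10}}$. The Vandermonde accounting proceeds pair by pair: each $(e_i-e_j)$ appears in $\Delta(\I_{\text{All}}^{(k)})^{1/4}$ for the $8$ values $k\notin\{i,j\}$, contributing $(e_i-e_j)^{8/4}=(e_i-e_j)^2$ from the singular factors; and it appears in $(\Delta(\I_0)\Delta(\J_0))^{1/4}$ whenever $i,j$ lie in the same block of an unordered $5$-$5$ partition, with $\binom{8}{3}=56$ such partitions, contributing $(e_i-e_j)^{56/4}=(e_i-e_j)^{14}$ from the non-singular factors. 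Together these yield $\Delta^{2+14}=\Delta^{16}$, producing the exact expression in \eqref{DChi68Dtau}. Finally, iterating Lemma~\ref{CMFI2} ten times (as in Lemma~\ref{CMF2I2}) and combining with Theorem~\ref{CMFGSI0} for the non-singular factors gives the transformation carrying $\det(c\tau+d)^{68}(c\tau+d)^{\otimes 10}\cdot ((c\tau+d)^t)^{\otimes 10}$, which is a vector-valued modular form of weight $68+20=88$. The principal obstacle is the combinatorial Vandermonde bookkeeping; once in place, everything else reduces to direct substitution from the Thomae formulas.
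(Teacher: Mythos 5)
Your proposal is correct and follows essentially the same route as the paper, which simply states that \eqref{DChi68Dtau} follows ``by direct computations, based on Lemma~\ref{CMFI2G4} and the First Thomae theorem.'' Your counting of even characteristics ($10+126=136$), the Leibniz/pigeonhole argument forcing each singular factor to be differentiated exactly once, and the Vandermonde exponent bookkeeping ($8/4+56/4=16$) correctly supply the details the paper leaves implicit.
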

\begin{proof}
The lowest non-vanishing derivative of $\chi_{68}$ is of order $10$, namely
 \begin{gather*}
\partial^{\otimes 10}_\tau \chi_{68} =  \Big(\SymOtimes \prod_{a=1}^{10} 
\big(\partial_\tau  \theta[\I_2^{(a)}]\big) \Big) \prod_{a=1}^{126} \theta[\I_0^{(a)}].
\end{gather*}
By direct computations, based on
Lemma~\ref{CMFI2G4} and the First Thomae theorem, \eqref{DChi68Dtau}
is obtained. 
Evidently, all components of $\partial^{\otimes 10}_\tau \chi_{68}$
are modular forms. 
The scalar multiple on the right hand side of  \eqref{DChi68Dtau} is a binary invariant, 
 and entries of the symmetrized tensor product 
are symmetric functions in roots $\{e_i\}_{i=0}^9$ of a binary form of degree $10$.
\end{proof}

\subsection{G\"{o}pel systems of $16$ characteristics}
In genus $4$ there are 2295  G\"{o}pel groups of rank $4$, and each has one wholly even G\"{o}pel system.
Among them $945$ G\"{o}pel systems have the type $16[\I_0]$,  and $1350$  have the type $2[\I_2]+14[\I_0]$.
The latter are of our particular interest.

Note that there are no G\"{o}pel systems corresponding to 
rank $4$ G\"{o}pel groups with only one singular even
characteristic.

\begin{theo}\label{T:GSr4Struct}
Let $(A P)$ be a wholly even G\"{o}pel system of $16$ characteristics in genus $4$
with two singular characteristics, say $[\{\kappa_1\}]$, $[\{\kappa_2\}]$. 
Then $(A P)$ is formed by characteristics
\begin{gather}\label{GSr4ch2I2p14I0G4}
\begin{split}
[\{\kappa_1\}],  \qquad  &[\{\kappa_2\}],\\ 
[\{\kappa_1,i_1,i_2,i_3,i_4\}],\qquad
&[\{\kappa_2,i_1,i_2,i_3,i_4\}],\\
[\{\kappa_1,i_1,i_2,j_1,j_2\}],\qquad
&[\{\kappa_2,i_1,i_2,j_1,j_2\}],\\
[\{\kappa_1,i_1,i_2,j_3,j_4\}],\qquad
&[\{\kappa_2,i_1,i_2,j_3,j_4\}],\\
[\{\kappa_1,i_1,i_3,j_1,j_3\}],\qquad
&[\{\kappa_2,i_1,i_3,j_1,j_3\}], \\
[\{\kappa_1,i_1,i_3,j_2,j_4\}],\qquad
&[\{\kappa_2,i_1,i_3,j_2,j_4\}], \\
[\{\kappa_1,i_1,i_4,j_1,j_4\}],\qquad
&[\{\kappa_2,i_1,i_4,j_1,j_4\}], \\
[\{\kappa_1,i_1,i_4,j_2,j_3\}],\qquad
&[\{\kappa_2,i_1,i_4,j_2,j_3\}].
\end{split}
\end{gather}
where $\{i_1,i_2,i_3,i_4,j_1,j_2,j_3,j_4\}$ are chosen from 
$8$ indices $\I_{All}\backslash \{\kappa_1,\kappa_2\}$.
\end{theo}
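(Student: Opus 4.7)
The plan is to argue in three steps, closely mirroring the proof of Theorem~\ref{T:GSr3StructG3} (the genus-$3$ analogue) but with two singular characteristics in play simultaneously.

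First, I would note that since $(AP)$ is a coset of the rank-$4$ Göpel group $(P)$, it has exactly $|AP|=16$ elements, and the sum
$$\sigma := [\{\kappa_1\}] + [\{\kappa_2\}] = \varepsilon(\{\kappa_1,\kappa_2\})$$
lies in $(P)$. Consequently addition by $\sigma$ is an involution on $(AP)$ that swaps $[\{\kappa_1\}]\leftrightarrow[\{\kappa_2\}]$ and organizes the remaining $14$ non-singular even characteristics into $7$ $\sigma$-orbits of size $2$.

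Next, for each non-singular even $[\I]\in(AP)$ I would write $[\I]=[\{\kappa_1\}\triangle R]=[\{\kappa_1,R\}]$ for a $4$-subset $R$ with $\kappa_1\notin R$ (as required by the size-$5$ partition representation of a non-singular even characteristic). Its $\sigma$-partner is then $[\{\kappa_2\}\triangle R]$; the case $\kappa_2\in R$ would produce a size-$3$ partition and hence an odd characteristic, contradicting the wholly even hypothesis. So $R\subset\I_{\text{All}}\setminus\{\kappa_1,\kappa_2\}$, and the $14$ non-singular characteristics are exactly $[\{\kappa_1,S_j\}]$ and $[\{\kappa_2,S_j\}]$ for $7$ four-subsets $S_j\subset\I_{\text{All}}\setminus\{\kappa_1,\kappa_2\}$.

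Third, the subsets $S_1,\dots,S_7$ together with $\sigma$ must generate a rank-$4$ Göpel group. Expressing the syzygetic form $|\cdot,\cdot|=0$ in terms of the branch-point characteristics of subsection~\ref{ss:CharHyper}, I would translate pairwise syzygy into the combinatorial condition that $|S_i\cap S_j|$ be even for every pair, while closure of $(P)$ under summation forces each $S_i\triangle S_j$, viewed modulo the kernel of $\varepsilon$ restricted to subsets of $\I_{\text{All}}\setminus\{\kappa_1,\kappa_2\}$, to reproduce another element in $\{0,\,S_k,\,\{\kappa_1,\kappa_2\},\,\{\kappa_1,\kappa_2\}\cup S_k\}$. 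A case analysis based on these constraints then shows that $\I_{\text{All}}\setminus\{\kappa_1,\kappa_2\}$ must split into four pairs $(i_k,j_k)_{k=1}^4$ with one distinguished pair $(i_1,j_1)$, and the $7$ subsets must consist of the three unions of pair~$1$ with another pair and the four transversal $4$-subsets containing $i_1$ with an even number of $j_k$'s, i.e.\ exactly the configurations listed in \eqref{GSr4ch2I2p14I0G4}.

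The hardest step will be the combinatorial rigidity argument in the third paragraph: verifying that the syzygetic-plus-closure constraints force precisely the stated $7$-subset pattern (up to relabeling of the $i$'s and $j$'s). I expect this to be tractable because the quotient $(P)/\langle\sigma\rangle$ has only $8$ elements, so the enumeration reduces to classifying a rank-$3$ subspace of the characteristic group modulo the kernel of $\varepsilon$. Matching the resulting count against the $1350$ wholly even Göpel systems of type $2[\I_2]+14[\I_0]$, distributed over the $\binom{10}{2}=45$ unordered pairs $\{\kappa_1,\kappa_2\}$, will then serve as the final consistency check.
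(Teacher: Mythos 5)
Your overall strategy is sound and genuinely different from the paper's. The paper's proof is an enumeration: it asserts (by ``straightforward computations'') that each configuration \eqref{GSr4ch2I2p14I0G4} is a wholly even G\"{o}pel system, counts such configurations ($45\times 30=1350$), and matches this against the stated total of $1350$ systems of type $2[\I_2]+14[\I_0]$, so the two lists must coincide. You instead take an arbitrary system and derive its structure. Your first two steps are complete, correct, and explain something the paper leaves tacit: since $\sigma=[\{\kappa_1\}]+[\{\kappa_2\}]$ lies in $(P)$, translation by $\sigma$ splits the coset into $8$ orbits of size $2$, and the parity argument (if $\kappa_2\in R$ the partner has a $3$-element index set, hence is odd) forces the $14$ non-singular members to be $[\{\kappa_1\}\cup S_j]$, $[\{\kappa_2\}\cup S_j]$ for seven $4$-subsets $S_j$ of $E=\I_{\text{All}}\setminus\{\kappa_1,\kappa_2\}$. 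This already accounts for the two-column shape of \eqref{GSr4ch2I2p14I0G4}.

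The gap is your third step, which you rightly flag as the crux but do not execute. What must be shown is that seven distinct $4$-subsets of the $8$-set $E$, closed under symmetric difference up to complementation in $E$ (equivalently, $\{\emptyset,E\}\cup\{S_j,E\setminus S_j\}_{j=1}^{7}$ is a $4$-dimensional subspace of $\mathbb{F}_2^{E}$ containing the all-ones vector, with all other nonzero weights equal to $4$), necessarily have the listed pattern; this amounts to classifying the $30$ copies of the extended Hamming $[8,4,4]$ code, and the pairwise-even-intersection (syzygy) condition you invoke is automatic from doubly-evenness, so it does not actually prune the case analysis. Two smaller points: (i) you must also exclude $S_i\triangle S_j=E$, i.e.\ $[\varepsilon(S_i)]+[\varepsilon(S_j)]=\sigma$, since that would make two of the sixteen characteristics coincide --- it follows from $|(AP)|=16$ but should be stated; (ii) the kernel you quotient by is $\{\emptyset,\I_{\text{All}}\}$ acting by complementation in the full index set, not something supported on $E$, so the options $\{\kappa_1,\kappa_2\}$ and $\{\kappa_1,\kappa_2\}\cup S_k$ in your closure list should be replaced by their complements $E$ and $E\setminus S_k$. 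If the rigidity analysis gets unwieldy, you can finish as the paper does: verify each configuration of the stated form is a wholly even G\"{o}pel system, count them ($30$ per pair $\{\kappa_1,\kappa_2\}$, $1350$ in all), and compare with the total number of systems of type $2[\I_2]+14[\I_0]$.
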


\begin{proof} is based on straightforward computations.

Indeed, there are exist $1350$  wholly even G\"{o}pel systems of $16$ characteristics
of the form \eqref{GSr4ch2I2p14I0G4}.
Two singular characteristics $[\{\kappa_1\}]$,  $[\{\kappa_2\}]$
can be chosen from $10$ indices in $45$ ways. 
The remaining 8 indices $\I_{\text{All}}\backslash \{\kappa_1,\kappa_2\}$ 
split into two equal size parts $\{i\}\cup\{j\} = \{i_1,i_2,i_3,i_4\}\cup\{j_1,j_2,j_3,j_4\}$ in $35$ ways,
 the order is not essential. 
Each part of four indices is divided into two pairs in $3$ ways, for example:
$\{i_1,i_2\} \cup \{i_3,i_4\}$, $\{i_1,i_3\}\cup \{i_2,i_4\}$, $\{i_1,i_4\}\cup \{i_2,i_3\}$ are
partitions of the part $\{i\}$. There are $6$ ways to combine a partition of the part $\{i\}$ 
with a partition of the part $\{j\}$. Thus, we obtain $210$ combinations like
$\{i_1,i_2\} \cup \{i_3,i_4\}$, $\{j_1,j_2\} \cup \{j_3,j_4\}$. Such $210$ combinations
split into $30$ collections of $7$, according to \eqref{GSr4ch2I2p14I0G4}
where the first column defined the first partition in a combination. 
\end{proof}

\begin{theo}
Every wholly even G\"{o}pel system $(AP)$ of $16$ characteristics 
as defined in Theorem~\ref{T:GSr4Struct},
with two singular characteristics, say $[\{\kappa_1\}]$, $[\{\kappa_2\}]$,
gives rise to a monomial in theta constants
\begin{gather}\label{Mu8G4}
\mu_8 (AP) = \theta[\{\kappa_1\}]  \theta[\{\kappa_2\}] \prod_{\I_0 \in (AP)} \theta[\I_0]. 
\end{gather}
The monomials $\mu_8$ vanish to the second order on the hyperelliptic locus of $\mathfrak{S}_4$,  
and  $\partial^{\otimes 2}_\tau \mu_8(AP)$ depends only on the roots with indices $\kappa_1$ and $\kappa_2$:
\begin{gather}\label{DMu8G4}
\partial_\tau^{\otimes 2} \mu_8 (\kappa_1,\kappa_2) =
\frac{1}{(4\imath \pi)^2} \Big(\frac{\det \omega}{\pi^4}\Big)^{8} 
\frac{\Delta^2}{[\kappa_1\kappa_2]^2} \, \omega^{\otimes 2} 
\SymOtimes \big\{\hat{S}(\{\kappa_1\}), \hat{S}(\{\kappa_2\})\big\} (\omega^t)^{\otimes 2},
\end{gather}
where $[\kappa_1\kappa_2]=\Ord(e_{\kappa_1}-e_{\kappa_2})$, and 
$\SymOtimes \big\{ \dots \big\}$ 
denotes a symmetrized tensor product.
\end{theo}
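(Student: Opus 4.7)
The plan is to combine Theorem~\ref{CMFGS2I2} and Lemma~\ref{CMF2I2} with the Third and First Thomae formulas; the substance of the proof lies in a combinatorial identity for the resulting Vandermonde product.

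First, among the $16$ characteristics of the G\"opel system $(AP)$ described in Theorem~\ref{T:GSr4Struct} exactly two are singular, namely $[\{\kappa_1\}]$ and $[\{\kappa_2\}]$, and the remaining $14$ are non-singular. Applying Theorem~\ref{CMFGS2I2} with $\dd=16$ yields the second-order vanishing of $\mu_8$ on the hyperelliptic locus together with the required transformation law for $\partial_\tau^{\otimes 2}\mu_8$, while Lemma~\ref{CMF2I2} identifies the lowest non-vanishing derivative with
\begin{equation*}
\partial_\tau^{\otimes 2}\mu_8(AP)
= \bigl(\partial_\tau\theta[\{\kappa_1\}]\SymOtimes\partial_\tau\theta[\{\kappa_2\}]\bigr)
\prod_{[\I_0]\in (AP)}\theta[\I_0].
\end{equation*}
Inserting Lemma~\ref{CMFI2G4} for the two singular derivatives and the First Thomae formula \eqref{thomae1Ord} for each of the other $14$ theta constants collects the prefactor $(4\imath\pi)^{-2}(\det\omega/\pi^4)^{8}$ together with the tensor $\omega^{\otimes 2}\SymOtimes\{\hat S(\{\kappa_1\}),\hat S(\{\kappa_2\})\}(\omega^t)^{\otimes 2}$ of \eqref{DMu8G4}, leaving the root-dependent factor
\begin{equation*}
\Delta(\I_{\text{All}}^{(\kappa_1)})^{1/4}\,\Delta(\I_{\text{All}}^{(\kappa_2)})^{1/4}
\prod_{[\I_0]\in (AP)}\bigl(\Delta(\I_0)\Delta(\J_0)\bigr)^{1/4}
\end{equation*}
to be identified.

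The main obstacle is to show that this product equals $\Delta^2/[\kappa_1\kappa_2]^2$. I would prove it by counting, for each unordered pair $\{p,q\}\subset\I_{\text{All}}$, its total multiplicity on the left. The two singular factors contribute $1/4$ on every pair disjoint from the corresponding $\{\kappa_r\}$, which yields $0$ on $\{\kappa_1,\kappa_2\}$, $1/4$ on pairs $\{\kappa_r,p\}$ with $p\notin\{\kappa_1,\kappa_2\}$, and $1/2$ on pairs inside $\I_{\text{All}}\setminus\{\kappa_1,\kappa_2\}$. The structure \eqref{GSr4ch2I2p14I0G4} pairs the $14$ non-singular characteristics as $\{A_s,B_s\}_{s=1}^{7}$ with $\J_0^{A_s}=\I_0^{B_s}$, so the non-singular factor collapses to $\prod_{s=1}^{7}\bigl(\Delta(\I_0^{A_s})\Delta(\J_0^{A_s})\bigr)^{1/2}$. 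A direct pair-by-pair tally over the seven four-element subsets $\{S_s\}$ read off \eqref{GSr4ch2I2p14I0G4} then shows that a pair $\{\kappa_r,p\}$ with $p\notin\{\kappa_1,\kappa_2\}$ lies on the same side in exactly $7$ of the $14$ characteristics, any pair inside $\I_{\text{All}}\setminus\{\kappa_1,\kappa_2\}$ does so in exactly $6$, and $\{\kappa_1,\kappa_2\}$ never does (since $\kappa_1$ and $\kappa_2$ always occupy complementary halves). Adding singular and non-singular contributions gives total multiplicity $2$ for every pair except $\{\kappa_1,\kappa_2\}$, whose multiplicity is $0$, which is exactly $\Delta^2/[\kappa_1\kappa_2]^2$ once the ordering signs are controlled by the operator $\Ord$ of Remark~\ref{R:NormMOrd}. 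Independence of the final expression on the specific choice of $(AP)$ beyond the pair $\{\kappa_1,\kappa_2\}$ follows immediately, since only $e_{\kappa_1}$ and $e_{\kappa_2}$ survive.
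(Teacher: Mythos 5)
Your approach coincides with the paper's (the paper itself only says the result ``follows from Lemma~\ref{CMFI2G4} and the first Thomae formula''), and the substance you add --- the pair-by-pair multiplicity count showing that the product of Vandermonde factors equals $\Delta^2/[\kappa_1\kappa_2]^2$ --- is exactly the computation the paper leaves implicit. I verified your final tally: $N=0$ for $\{\kappa_1,\kappa_2\}$, $N=7$ for $\{\kappa_r,p\}$, and $N=6$ for pairs inside $\I_{\text{All}}\setminus\{\kappa_1,\kappa_2\}$ (each such pair lies on the same side of the splitting $S_s\,|\,S_s^c$ for exactly $3$ of the $7$ subsets $S_s$ in \eqref{GSr4ch2I2p14I0G4}, hence in $6$ of the $14$ characteristics), which together with the singular contributions gives multiplicity $2$ everywhere except $0$ on $\{\kappa_1,\kappa_2\}$.

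One intermediate assertion is wrong and should be deleted: the claim that the $14$ non-singular characteristics pair up with $\J_0^{A_s}=\I_0^{B_s}$. For $A_s=[\{\kappa_1\}\cup S_s]$ one has $\J_0^{A_s}=\{\kappa_2\}\cup S_s^c$, whereas $\I_0^{B_s}=\{\kappa_2\}\cup S_s$; since every $S_s$ in the list contains $i_1$, the complement $S_s^c$ is never one of the seven subsets, so $[\{\kappa_2\}\cup S_s^c]$ is just $A_s$ again (complementary representation of the same characteristic), not $B_s$. The proposed collapse to $\prod_{s=1}^{7}\bigl(\Delta(\I_0^{A_s})\Delta(\J_0^{A_s})\bigr)^{1/2}$ would moreover give non-uniform multiplicities on pairs $\{\kappa_r,p\}$ (e.g.\ $7/2$ for $p=i_1$ versus $3/2$ for $p=i_2$), contradicting the symmetric answer. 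Fortunately your actual tally is taken over all $14$ characteristics individually and does not use the collapse, so the proof survives once that sentence is removed.
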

This result follows from Lemma~\ref{CMFI2G4} and the first Thomae formula.


\end{document}